\newtheorem{theorem}{Theorem}
\newtheorem{lemma}[theorem]{Lemma}
\newtheorem{proposition}[theorem]{Proposition}
\newtheorem{obs}[theorem]{Observation} \newtheorem{defi}[theorem]{Definition}
\newenvironment{definition}{\begin{defi}\rm}{\end{defi}}
\newtheorem{exa}[theorem]{Example}
\newtheorem{rem}[theorem]{Remark}
\newtheorem{rems}[theorem]{Remarks}
\newtheorem*{acknowlegment}{Acknowlegment}
\def\B{\mathcal B}
\def\E{\mathcal E}
\def\NN{{\mathbf N}}
\def\RR+{{\mathbf R}^*}
\def\Q_p{{\mathbf Q}_p}
\def\ind{{\rm Ind}}
\def\sgn{\rm sgn}
\def\tous{\qquad\text{for all}\quad}
\def\ind{{\rm Ind}}
\def\Hom{{\mathrm {Hom}}}
\newcommand{\Ind}{\operatorname{Ind}}
\def\tout{\qquad\text{for all}\quad}
\newcommand{\Iso}{\textnormal{Iso}}
\def\Q{\mathbf Q}
\newcommand{\Mod}[1]{\ (\mathrm{mod}\ #1)}
\newtheorem{thmx}{Theorem}
\newtheorem{corx}[thmx]{Corollary}
\begin{document}

\title[The $L_p$-dual space of  a semisimple  Lie group]{The $L_p$-dual space of a semisimple  Lie group}

\address{Bachir Bekka \\ Univ Rennes \\ CNRS, IRMAR--UMR 6625\\
Campus Beaulieu\\ F-35042  Rennes Cedex\\
 France}
 \date{\today}
\email{bachir.bekka@univ-rennes1.fr}
\author{Bachir Bekka}

\thanks{The author acknowledges the support  by the ANR (French Agence Nationale de la Recherche)
through the project Labex Lebesgue (ANR-11-LABX-0020-01) .}
\begin{abstract}
Let $G$ be a  semisimple  Lie group. We describe
the irreducible representations of $G$ by linear isometries
on $L_p$-spaces for $p\in (1,+\infty)$ with $p\neq 2.$
More precisely, we show that, for every  such representation $\pi,$
there exists  a parabolic subgroup $Q$ of $G$ such that $\pi$ is equivalent to the 
natural representation of $G$ on $L_p(G/Q)$ 
twisted by a unitary character of  $Q.$
When $G$ is of real rank one, we give a complete classification of 
the possible irreducible representations of $G$ on an $L_p$-space for $p\neq 2,$ up to equivalence.

\end{abstract}
\subjclass[2000]{22E46; 47L10; 37A40}
\maketitle
\section{Introduction}
 Let $G$ be a locally compact topological group 
 and $E$ a Banach space. Denote by $\B(E)$ the algebra of bounded operators
  on $E$, equipped   with the strong operator topology, that is, the
 weakest topology  for which the map $\B(E) \to E, T\mapsto Tv$ is continuous  for  every $v\in V.$
An \emph{isometric representation} (for short, a \emph{representation})  of $G$ on $E$ is a continuous group homomorphism 
 $\pi: G\rightarrow\Iso(E)$,  where $\Iso(E)$ is the subgroup of $GL(E)$ consisting of the  linear surjective isometries on $E$. 
    
In the context of  semisimple Lie groups, there is strong evidence (see \cite{HC1}, \cite{Godement1}, \cite{Warner1}, \cite{Fell1}) that the right  notions of irreducibility and equivalence of Banach representations are defined as follows.
Recall that to a representation
 $\pi: G\rightarrow\Iso(E)$ of $G$ there is associated an algebra homomorphism $C_c(G) \to \B (E)$,
denoted by $\pi$ again, defined by
$$
\pi(f) \, = \, \int_G f(g) \pi(g) d\mu_G(g) 
\tout 
f \in C_c(G),
$$
where  $C_c(G)$ is the convolution algebra  of continuous functions on $G$
with compact support and $\mu_G$ is a left Haar measure on $G.$
\begin{definition}
 \label{Def1}
(i)  A representation  $\pi: G\rightarrow\Iso(E)$  is \emph{ completely irreducible}
 if the algebra 
 $\pi(C_c(G)) $ is dense in $\B(E)$ for the strong operator topology.
 
 \noindent
 (ii)  Two representations  $\pi_1: G\rightarrow\Iso(E_1)$ and $\pi_2: G\rightarrow\Iso(E_2)$ 
 on Banach spaces $E_1$ and $E_2$ are \emph{Naimark equivalent} (for short, \emph{equivalent}) if, for $i\in\{1,2\},$  there 
there exists a   dense subspace $V_i$ of $E_i$  which
are $\pi_i(C_c(G)$-invariant and a closed injective linear map  $T: V_1\to V_2$
such that 
$$\pi_2(f)Tv  = T\pi_1(f)v \tout  v\in E_1\hskip.2cm 
f \in C_c(G).$$

 \noindent
 (iii) Let $\E$ be a class of Banach spaces. The $\E$-\emph{dual} of $G$, denoted by 
$ \widehat{G}_\E,$
 is the set of  equivalence classes of completely irreducible representations of $G$ on some Banach space $E\in \E;$

  \end{definition}
  When $E$ is a Hilbert space, a representation   $ G\rightarrow\Iso(E)$ is traditionally called a unitary representation;
  a unitary representation $\pi$ is completely irreducible  if and only if $\pi$ is  irreducible (that is, 
  $\{0\}$ and $E$ are the only $G$-invariant closed subspaces of $E$).
  Moreover, unitary representations $\pi_1$ and $\pi_2$ are Naimark equivalent if and only if 
  $\pi_1$ and $\pi_2$ are unitarily equivalent  (see \cite[4.3]{Warner1}); in particular, if $\E$ denotes  the class of Hilbert spaces,
  then  $ \widehat{G}_\E$ coincides with the usual unitary dual $\widehat{G}$ of $G.$
     
   Let $G$ a noncompact connected semisimple linear Lie group.
     For such a group, we will be concerned with $\widehat{G}_{L_p}$ where $L_p$ is the class of $L_p$-spaces   for $p\in [1, +\infty[.$
By an $L_p$-space,   we mean the usual space $L_p(X, \B, \mu)$ of equivalence classes (modulo null sets) of measurable $p$-integrable functions  $f:X\to \mathbf{C},$ , where $\mu$ is a positive $\sigma$-finite measure defined on a standard Borel space 
 $(X,\B).$
The unitary dual space $\widehat{G}= \widehat{G}_{L_2}$ is of course a classical much studied object (see  for instance \cite{Knapp1}, \cite{Warner1}). We will deal here with   $\widehat{G}_{L_p}$ for $p\neq 2.$

Let $\theta$ be a  Cartan involution on the Lie algebra $\mathfrak{g}$ of $G$
and  $\mathfrak{k}$ (respectively $\mathfrak{p}$) the  eigenspace
for the eigenvalue $1$ (respectively $-1$) of $\theta.$
Let  $\mathfrak{a}$ be a maximal commutative subspace of $\mathfrak{p}$
and let $\Sigma\subset \mathfrak{a}^*$ be the corresponding root system, where 
$\mathfrak{a}^*= \Hom_{\mathbf R}(\mathfrak{a}, \mathbf{R})$ is the real dual space
of $\mathfrak{a}.$
Let $\Sigma^+$ be the set of positive roots for an ordering of  $\Sigma$
and $\mathfrak{n}$  the 
sum of the root spaces  $\mathfrak{g}_\alpha$ for $\alpha \in \Sigma^+.$
Let  $K$, $A,$ and $N$ be the subgroups
of $G$  with Lie algebras $\mathfrak{k},  \mathfrak{a}$ and  $\mathfrak{n},$
respectively. Then $K$ is a maximal compact, $A$ an abelian and $N$ a nilpotent subgroup;
moreover, we have an \emph{Iwasawa decomposition} $G= KAN.$

Let  $M$ be  the centralizer of $A$ in $K$.  The group  $MAN$ is a minimal  parabolic subgroup of $G.$
Recall (see \cite[Section V.5]{Knapp1} or  \cite[Chap. I, 1.2]{Warner1}) that a \emph{parabolic subgroup} of $G$ is a closed subgroup containing a conjugate of $MAN.$ Every such  group is conjugate to a 
\emph{standard parabolic subgroup} $Q=Q_F$,  which is  parametrized by a subset $F$ of the set $\Delta$ of simple positive roots; more precisely, let $\mathfrak{a}_F$ be the intersection of  all $\ker \alpha$
for $\alpha \in F$ and $\mathfrak{n}_F$ the sum of the root spaces $\mathfrak{g}_\alpha$
for $\alpha\in \Sigma^+ \setminus \rm{span}(F)$. We have a \emph{Langlands decomposition}
 $$Q=M_Q A_QN_Q,$$
  where $M_Q$ is reductive and contains $M$, 
  $A_Q= \exp(\mathfrak{a}_F)$ 
  and $N_Q= \exp(\mathfrak{n}_F)$.
 Moreover, $M_Q$ and $A_Q$ commute, and both normalize $N_Q$.
 In particular, the standard minimal parabolic parabolic subgroup is given by 
 $Q_\emptyset= MAN.$

  Fix a parabolic subgroup $Q=Q_F$ of $G$  and   a real number $p\in [1,+\infty[.$ 
 We are going to define the $L_p$ analog of the (unitary) principal 
 series representations associated to $Q$.
 
Let $K_Q:= K\cap M_Q$ and choose a Borel fundamental domain $\Omega_Q\subset K$ for the
 coset space $K/K_Q$.
  Then $\Omega_Q$ is also a fundamental domain for $G/Q,$ 
  since  $G= K Q$ and $K\cap Q= K_Q.$
Every  $g\in G$ has a unique decomposition 
 $$g=\kappa(g) \mu(g) \exp(H(g)) n(g) \qquad\text{for}\quad \kappa(g)\in \Omega_Q, \mu(g)\in M_Q,  H(g) \in \mathfrak{a}_F,n(g)\in N_Q.$$
  Let $\mu_{Q}$ be the unique quasi-invariant Borel probability measure
 on $G/ Q$ which is $K$-invariant.
 We identify $G/ Q$ with $\Omega_Q$ and transfer $\mu_Q$ and the natural $G$-action
 on $G/Q$ to $\Omega_Q.$
Let $\rho_{Q,p} \in \mathfrak{a}^*$ be  defined by 
$$\rho_{Q,p} := \dfrac{1}{p}\sum_{\alpha \in \Sigma^+ \setminus \rm{span}(F)}
 \dim(\mathfrak{g}_\alpha) \alpha.$$
 Observe that $\rho_{MAN,2}$ is the familiar half-sum of positive roots.

 Fix a real linear form $\lambda\in \mathfrak{a}^*$
 and a unitary character $$\chi: M_Q\to  \mathbf{S}^1=\mathbf{U}(1)$$ of the reductive group $M_Q.$ 
We define a  representation $\pi (Q,  \chi, \lambda, p)$ of $G$ on 
$$L_p(G/Q, \mu_{Q}) = L_p(\Omega_Q, \mu_{Q})$$
 by 
 $$
 \pi(Q,\chi,\lambda, p)(g) f(x) = \overline{\chi(\mu(g^{-1} x))} e^{-(i\lambda+ \rho_{Q,p}) H(g^{-1} x)} f(\kappa(g^{-1} x))
 $$
 for $f\in L_p(\Omega_Q, \mu_{F}), x\in \Omega_Q, g\in G.$ 
It is easily verified  (see Section~\ref{S:Proof-Theo1}) that $ \pi(Q,\chi, \lambda, p)$
 is indeed an isometric representation of $G$ on  $L_p(G/Q, \mu_{Q})$. 

We denote by $\widehat{M_Q^{\rm Ab}}$ the group of unitary characters  of $M_Q.$
Here is our main result.

\begin{thmx}
\label{Theo1}
 Let $G$  be a noncompact connected semisimple linear Lie group,
 and let  $p\in (1, +\infty)$ with  $p\neq 2.$
 Let $\pi$ be a completely irreducible representation of $G$ on an $L_p$-space. Then there exists
  a parabolic subgroup $Q$ of  $G$,
a real linear form $\lambda\in \mathfrak{a}^*,$ and a unitary character $\chi \in \widehat{M_Q^{\rm Ab}}$
 such that $\pi$ is equivalent to the representation $ \pi(Q,\chi, \lambda, p)$
  on $L_p(G/Q, \mu_{Q})$ described above.
  \end{thmx}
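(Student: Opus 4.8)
The plan is to build on \emph{Lamperti's description of the surjective linear isometries of $L_p$-spaces}, available precisely because $p\neq 2$: every such isometry $T$ of $L_p(X,\mu)$ has the form $Tf=h\cdot(f\circ\tau)$, with $\tau$ a nonsingular Borel transformation of $(X,\mu)$ and $|h|^p$ the associated Radon--Nikodym derivative. Applying this to each operator $\pi(g)$ and controlling the dependence on $g$ (measurable selection of the weights, and Mackey's point realisation of the Borel data so obtained), one produces a nonsingular Borel action of $G$ on a standard probability space $(X,\mu)$ together with a Borel cocycle $c\colon G\times X\to\mathbf{S}^1$ such that, after an identification of $L_p$-spaces,
$$
\pi(g)f(x)=c(g^{-1},x)\left(\frac{d(g^{-1}\mu)}{d\mu}(x)\right)^{1/p}f(g^{-1}x)
\tout f\in L_p(X,\mu),\ g\in G,
$$
where $g^{-1}\mu$ is the translate of $\mu$. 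Thus $\pi$ is completely encoded by the pair consisting of the action $G\curvearrowright(X,\mu)$ and the $\mathbf{S}^1$-valued cocycle $c$, and the problem is transported into the setting of nonsingular actions. (One checks along the way that a compact factor of $G$ acts trivially on $(X,\mu)$, so we may and do assume that $G$ has no compact factor.)

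Next I would extract what complete irreducibility imposes on the action $G\curvearrowright(X,\mu)$. It implies topological irreducibility: if $F$ were a proper nonzero closed invariant subspace, no $\pi(f)$ with $f\in C_c(G)$ could move a vector of $F$ outside $F$, whereas general bounded operators do, contradicting the strong density of $\pi(C_c(G))$ in $\B(L_p(X,\mu))$. Since, for a $G$-invariant measurable set $A$ with $0<\mu(A)<\mu(X)$, the subspace $\mathbf{1}_AL_p(X,\mu)$ is of this kind, the action is \emph{ergodic}; and the delicate point is then to promote ergodicity to \emph{essential transitivity}, i.e.\ to identify $(X,\mu)$ --- equivariantly and within its measure class --- with a homogeneous space $\bigl(G/H,[\mu_{G/H}]\bigr)$ for some closed subgroup $H\le G$. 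This is the main obstacle: ergodicity is far too weak, as semisimple Lie groups carry a wealth of non-transitive ergodic nonsingular actions (lattice quotients, Gaussian and Bernoulli actions, and so on), and all of them have to be excluded. Here I would use that $L_\infty(X)$, acting by multiplication operators, is a maximal abelian subalgebra of $\B(L_p(X,\mu))$ normalised by $\pi(G)$ through $\pi(g)M_\varphi\pi(g)^{-1}=M_{\varphi\circ g^{-1}}$, and argue that the strong density of $\pi(C_c(G))$ in \emph{all} of $\B(L_p(X,\mu))$ cannot coexist with any nontrivial $G$-invariant internal structure on $(X,\mu)$ --- a nontrivial $G$-invariant sub-$\sigma$-algebra, or a non-homogeneous spectral type in the associated unitary representation on $L_2(X,\mu)$, to which Howe--Moore type arguments can be transferred. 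Combined with the structure theory of ergodic actions of semisimple Lie groups, this should force $(X,\mu)$ to be homogeneous.

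Granting that $(X,\mu)\cong\bigl(G/H,[\mu_{G/H}]\bigr)$, the cocycle is normalised using the standard dictionary, valid for transitive actions, between measurable $\mathbf{S}^1$-valued cocycles over $G\curvearrowright G/H$ and continuous unitary characters of $H$ modulo coboundaries (equivalently, up to replacing $H$ by a conjugate): $c$ is cohomologous to the cocycle attached to a unitary character $\eta\colon H\to\mathbf{S}^1$. Absorbing the coboundary into the intertwining map and choosing for $\mu$ the $K$-invariant representative of its measure class turns the Radon--Nikodym factor into the explicit exponential that appears in the definition of $\pi(Q,\chi,\lambda,p)$; hence $\pi$ becomes equivalent to the weighted composition (``$L_p$-induced'') representation of $G$ on $L_p(G/H)$ attached to $\eta$.

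It then remains to prove that complete irreducibility of this induced representation forces $H$ to be conjugate to a standard parabolic subgroup $Q=Q_F$, with $\eta$ trivial on $N_Q$. For a non-parabolic $H$ --- a lattice, the trivial subgroup, a reductive subgroup of the wrong type, and so forth --- the homogeneous space $G/H$ retains too much invariant geometry for $L_p(G/H)$, even twisted, to be completely irreducible: one produces a nonscalar operator commuting with $\pi(G)$, for instance from a nonzero invariant vector or from a nontrivial $G$-invariant sub-$\sigma$-algebra. Triviality of $\eta$ on $N_Q$ is then automatic, since $M_QA_Q$ acts on $\mathfrak{n}_Q$ without nonzero invariants, so that $N_Q\subseteq[Q,Q]$; using $Q/N_Q\cong M_QA_Q$ one writes $\eta=\chi\otimes e^{i\lambda}$ with $\chi\in\widehat{M_Q^{\rm Ab}}$ and $\lambda\in\mathfrak{a}^*$, and a direct comparison of the explicit formulas identifies the resulting representation on $L_p(G/Q)$ with $\pi(Q,\chi,\lambda,p)$, which proves Theorem~\ref{Theo1}. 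This parabolic-rigidity step, together with the essential transitivity of the underlying action, is the technical heart of the argument; everything else is transport of the explicit formulas. In real rank one the only parabolic subgroups are $G$ itself, giving the trivial representation on $L_p(G/G)=\CCC$, and the minimal parabolic $MAN$, giving the $L_p$-analogue of the principal series --- which is why the classification becomes complete in that case.
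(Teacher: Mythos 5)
Your first step (Banach--Lamperti, a nonsingular $G$-action with an $\mathbf{S}^1$-valued cocycle) and your final bookkeeping (strict cocycles over a transitive action correspond to characters of the stabilizer, triviality on $N_Q$ via $[\mathfrak{a}_F,\mathfrak{n}_F]=\mathfrak{n}_F$, and the Radon--Nikodym computation producing $\rho_{Q,p}$) agree with the paper. But the core of the theorem is exactly the part you leave as a hope: promoting ergodicity to essential transitivity and identifying the stabilizer as a parabolic. Your proposed mechanism --- maximal abelianness of $L_\infty(X)$, ``Howe--Moore type arguments,'' and an appeal to ``the structure theory of ergodic actions'' --- is not an argument; ergodic nonsingular actions of $G$ are wildly non-classifiable, and nothing you write explains why strong density of $\pi(C_c(G))$ excludes, say, a Gaussian or Bernoulli action or $G\curvearrowright G/\Gamma$ for a lattice $\Gamma$. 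The paper's decisive input, which your proposal never identifies, is the classical fact (Godement, Harish-Chandra) that for a completely irreducible Banach representation every $K$-isotypic projection $P_\pi(\delta)=\pi(\overline{\chi_\delta})$ has \emph{finite-dimensional} range. After averaging $\mu$ to be $K$-invariant, the paper disintegrates $L_p(X,\mu)$ as an $L_p$-direct integral over the orbit space $X/K$ and shows that a nonatomic part of the orbit-space measure would produce infinitely many linearly independent vectors in the range of some $P_\pi(\delta)$; hence countably many $K$-orbits carry full measure. Combined with ergodicity and the smooth-manifold structure of $G/H$ (a positive-measure $K$-orbit must be open), this yields not just transitivity of $G$ but transitivity of $K$ on $X\cong G/H$. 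This finiteness mechanism is precisely what kills all the non-transitive ergodic models you list, and it has no counterpart in your outline.

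The second gap is the identification of $H$. You assert a ``parabolic-rigidity'' principle (non-parabolic $H$ leaves too much invariant structure), again without an argument. The paper instead exploits the $K$-transitivity just obtained: $G=KH$ makes $H$ cocompact, and Witte's classification of cocompact subgroups of semisimple Lie groups gives a standard parabolic $Q=M_QA_QN_Q\supseteq H$ with $H^0=C'SA_QN_Q$ for a closed connected $C'\subseteq C$; two further irreducibility arguments (constructing explicit proper invariant subspaces of $L_p(\Omega_0\times\Omega_1)$ from a proper $L$-invariant subspace of $L_p(L/T)$, and from a nontrivial character of the finite quotient $Q/H$) force first $[Q:H]<\infty$ and then $H=Q$. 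Without the finite-rank projection input and without the cocompactness--Witte step, your outline reduces the theorem to two unproved assertions, so as it stands it does not constitute a proof.
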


In order to study the irreducibility of a   representation $ \pi(Q,\chi, \lambda, p)$
 as in the previous theorem, we consider its  associated Harish-Chandra $(\mathfrak{g}, K)$-module.
 For a unitary representation $\sigma$ of $M_Q$ and for 
  $\nu\in \mathfrak{a}^*_\mathbf{C}=  \Hom_{\mathbf R}(\mathfrak{a}, \mathbf{C}),$
  following \cite[5.2]{Wallach-book}, 
 we denote  by $I_{Q,  \sigma, \nu}$
  the associated  infinitesimal nonunitary principal  series of $G,$
 that is, the $(\mathfrak{g}, K)$-module underlying  the induced 
representation 
$$ \ind_{Q}^G(\sigma \otimes \nu\otimes 1_{N_Q})=  \Ind_{M_QA_QN_Q}^G (\sigma \otimes \nu\otimes 1_{N_Q}).$$

   \begin{proposition}
 \label{Pro1}
  \begin{itemize}
  \item[(i)] Let $ \pi(Q,\chi, \lambda, p)$ be a representation of $G$ 
  on $L_p(G/Q, \mu_{Q})$ as in Theorem~\ref{Theo1}.
  Then $ \pi(Q,\chi, \lambda, p)$ is completely irreducible if and only if the 
  principal  series representation $I_{Q,  \chi, \nu}$ is (algebraically) irreducible,
  where 
  $$\nu= i\lambda+ \delta_p\rho_{Q,2} $$
for $\delta_{p}=\dfrac{2}{p}- 1\in (-1,1).$
 \item[(ii)] For $i=1,2,$ let  $\pi(Q_i,\chi_i,\lambda_i, p_i)$ 
  be  representations of $G$ 
  on $L_{p_i}(G/Q_i, \mu_{Q_i})$ as in Theorem~\ref{Theo1}.
  Then  $\pi(Q_1,\chi_1,\lambda_1, p_1)$ 
 and $\pi(Q_2,\chi_2,\lambda_2, p_2)$ are equivalent 
 if and only if the corresponding principal  series representations $I_{Q_1,  \chi_1, \nu_1}$
 and $I_{Q_2,  \chi_2, \nu_2}$ as in (i) are (algebraically) equivalent.
 \end{itemize}
 \end{proposition}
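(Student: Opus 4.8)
The plan is to reduce both statements to facts about Harish-Chandra $(\mathfrak{g},K)$-modules, by passing to the subspace of $K$-finite vectors of $\pi(Q,\chi,\lambda,p)$. Write $E=L_p(G/Q,\mu_Q)$ and $\pi=\pi(Q,\chi,\lambda,p)$. First I would observe that, since $\mu_Q$ is the $K$-invariant probability measure on $\Omega_Q\cong K/K_Q$, a short computation with the defining formula shows that $\pi|_K$ is isometrically isomorphic to $\Ind_{K_Q}^K(\chi|_{K_Q})$ on $L_p(K/K_Q)$ (when $g\in K$ the element $g^{-1}x$ lies in $K$, so its $\Omega_QM_QA_QN_Q$-decomposition has trivial $A_Q$- and $N_Q$-parts because $K\cap Q=K_Q\subseteq M_Q$); by Frobenius reciprocity this is admissible, so each $K$-isotypic subspace $E(\tau)$, $\tau\in\widehat{K}$, is finite dimensional, consists of real-analytic ($K$-matrix-coefficient) functions, and is \emph{the same} subspace for every $p\in[1,+\infty)$. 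Hence $\mathcal{A}(Q):=\bigoplus_{\tau}E(\tau)$, the space of $K$-finite vectors, is a $p$-independent dense subspace lying in the smooth vectors of $\pi$ (the orbit map $g\mapsto\pi(g)f$ is smooth for $f\in\mathcal{A}(Q)$ by smoothness of $\kappa,\mu,H$ and compactness of $\Omega_Q$). Differentiating the defining formula I would compute the $\mathfrak{g}$-action on $\mathcal{A}(Q)$ and match it with the compact-picture realization of $I_{Q,\chi,\nu}$: the two coincide precisely because $\rho_{Q,p}=\tfrac2p\rho_{Q,2}=\rho_{Q,2}+\delta_p\rho_{Q,2}$, so that $i\lambda+\rho_{Q,p}=\nu+\rho_{Q,2}$ with $\nu=i\lambda+\delta_p\rho_{Q,2}$. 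This yields a $(\mathfrak{g},K)$-isomorphism $\mathcal{A}(Q)\cong I_{Q,\chi,\nu}$; fixing the normalization conventions so that this parameter shift comes out exactly is the first delicate point, but it is essentially bookkeeping.

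Next I would establish a transfer lemma. Let $\mathcal{R}\subseteq C_c(G)$ be the Hecke algebra of $K$-bifinite functions, so that the $(\mathfrak{g},K)$-submodules of $\mathcal{A}(Q)$ are exactly the $\pi(\mathcal{R})$-invariant subspaces. For $\phi\in C_c(G)$ and $f\in E(\tau)$ one has $\pi(\phi)f=\pi(\phi)e_\tau f$, where $e_\tau$ is the central projection onto $E(\tau)$, which is bounded on $L_p$; approximating $\phi*(d_\tau\overline{\chi_\tau})$ in $L_1(G)$ by $K$-bifinite functions (convolve on the left with $\sum_{\sigma\le n}d_\sigma\overline{\chi_\sigma}$ and use $\sum_{\sigma\le n}e_\sigma\to\mathrm{id}$ strongly on $L_p$) gives $\pi(\phi)f=\lim_n\pi(\phi_n)f$ with $\phi_n\in\mathcal{R}$. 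Consequently the $L_p$-closure of any $(\mathfrak{g},K)$-submodule $V\subseteq\mathcal{A}(Q)$ is $\pi(C_c(G))$-invariant, hence $G$-invariant.

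For part (i): complete irreducibility trivially forces topological irreducibility. If $I_{Q,\chi,\nu}\cong\mathcal{A}(Q)$ were reducible, the $L_p$-closure of a proper nonzero submodule $V$ would be a nonzero $G$-invariant closed subspace by the transfer lemma, and it is \emph{proper}, since $e_\tau(\overline{V})\subseteq\overline{E(\tau)\cap V}=E(\tau)\cap V\subsetneq E(\tau)$ for some $\tau$ (finite-dimensional subspaces are closed); this contradicts topological irreducibility. For the converse, assuming $I_{Q,\chi,\nu}$ algebraically irreducible, I would invoke Dixmier's form of Schur's lemma, $\operatorname{End}_{(\mathfrak{g},K)}(I_{Q,\chi,\nu})=\CCC$, together with the Jacobson density theorem: for each finite $S\subseteq\widehat{K}$ the image of $\pi(e_S\mathcal{R}e_S)$ in $\operatorname{End}\big(\bigoplus_{\tau\in S}E(\tau)\big)$ is everything, and since $\bigoplus_\tau E(\tau)$ is dense and each $\pi(r)$ is bounded, the strong closure of $\pi(C_c(G))$ is all of $\mathcal{B}(E)$. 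I expect this passage from algebraic irreducibility to the strong-operator-topology statement to be the main obstacle.

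For part (ii), write $\pi_i=\pi(Q_i,\chi_i,\lambda_i,p_i)$ on $E_i$ with $\mathcal{A}(Q_i)\cong I_{Q_i,\chi_i,\nu_i}$. Given an isomorphism $I_{Q_1,\chi_1,\nu_1}\cong I_{Q_2,\chi_2,\nu_2}$, transport it to a $(\mathfrak{g},K)$-isomorphism $T_0\colon\mathcal{A}(Q_1)\to\mathcal{A}(Q_2)$ and let $\Gamma$ be the closure of its graph in $E_1\oplus E_2$. Applying $e_\tau\oplus e_\tau$ and using that $T_0$ bijects the finite-dimensional spaces $E_1(\tau)$ and $E_2(\tau)$ shows that $\Gamma$ is the graph of a closed injective operator $T$ with dense domain $V_1$ and dense range $V_2$; the transfer lemma applied to the diagonal action shows $\Gamma$ is $\pi_1(f)\oplus\pi_2(f)$-invariant for all $f\in C_c(G)$, so $(V_1,V_2,T)$ witnesses a Naimark equivalence. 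Conversely, from a Naimark equivalence $(V_1,V_2,T)$, density and $\pi_i(C_c(G))$-invariance of the $V_i$ force $e_\tau V_i=E_i(\tau)$, so $T$ restricts to linear maps $E_1(\tau)\to E_2(\tau)$ which assemble to a $(\mathfrak{g},K)$-map $\mathcal{A}(Q_1)\to\mathcal{A}(Q_2)$, injective because $T$ is; running the same argument for $T^{-1}$ (which witnesses the reverse equivalence) gives $\dim E_1(\tau)=\dim E_2(\tau)$, so the map is an isomorphism. The only real care here concerns the domains of the unbounded operators, which the transfer lemma handles.
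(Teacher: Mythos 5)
Your first half --- realizing $\pi(Q,\chi,\lambda,p)$ in the compact picture, getting admissibility from Frobenius reciprocity, and identifying the $K$-finite vectors with $I_{Q,\chi,\nu}$ via the bookkeeping $i\lambda+\rho_{Q,p}=\nu+\rho_{Q,2}$, $\nu=i\lambda+\delta_p\rho_{Q,2}$ --- is exactly what the paper does. Where you diverge is that the paper then simply quotes the classical theorems for admissible ($K$-finite) Banach representations (Warner, Theorems 4.5.5.2 and 4.5.5.4, going back to Godement and Harish-Chandra): complete irreducibility is equivalent to algebraic irreducibility of the underlying $(\mathfrak{g},K)$-module, and Naimark equivalence is equivalent to algebraic equivalence. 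You instead sketch a re-proof of these general facts, and that sketch has two genuine holes. First, your ``transfer lemma'' rests on the parenthetical assertion that the $(\mathfrak{g},K)$-submodules of $\mathcal{A}(Q)$ are exactly the $\pi(\mathcal{R})$-invariant subspaces. That is not a formality: a subspace invariant under $\mathfrak{g}$ and $K$ is not obviously preserved by the integrated operators $\pi(\phi)$, $\phi$ $K$-bifinite; the standard proof goes through analyticity (or smoothness plus a power-series/G\r{a}rding argument) of $K$-finite vectors of an admissible representation, i.e.\ precisely the machinery the paper imports by citation. Without it, your approximation of $\phi*(d_\tau\overline{\chi_\tau})$ by $K$-bifinite $\phi_n$ only shows $\pi(\phi)f$ lies in the closure of the $\pi(\mathcal{R})$-module generated by $f$, not in $\overline{V}$.

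Second, in the converse of (i), the step from Jacobson density on each finite block to ``the strong closure of $\pi(C_c(G))$ is all of $\mathcal{B}(E)$'' is the heart of the matter, and the one-line justification (``$\bigoplus_\tau E(\tau)$ is dense and each $\pi(r)$ is bounded'') does not close it: Jacobson density gives no norm control on the chosen $r$, and for $p\neq 2$ the isotypic projections $P_S$ are not uniformly bounded on $L_p$, so you cannot simply swap arbitrary test vectors $f_i$ for $K$-finite approximants. A correct argument exists --- reduce to linearly independent $f_i$, note that $(P_Sf_i)$ stays independent for $S$ large, realize $P_{S'}TP_S$ exactly as $\pi(r)P_S$ with $P_S$ in the strong closure of $\pi(C_c(G))$ via an approximate identity --- but writing it out amounts to re-proving Warner 4.5.5.4. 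Similarly, in (ii) the reverse direction needs the closedness of $T$ together with an approximate identity on $K$ to get $E_1(\tau)\subseteq V_1$ and $TP^1_\tau=P^2_\tau T$ (density and invariance of $V_1$ alone do not give $E_1(\tau)\subseteq V_1$), and ``running the argument for $T^{-1}$'' presupposes $T$ has dense range. The efficient repair is the paper's: after your (correct) identification of the $(\mathfrak{g},K)$-module, cite the standard equivalences for admissible representations rather than rederive them.
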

 Principal series representations  $I_{Q,  \sigma, \nu}$ are of course central objects in the  representation theory of
 semisimple   Lie groups;  their structure   has been much studied, especially in the case where $Q$ is minimal parabolic, and
 it has been shown that ``most" of them  are  irreducible (see \cite{Gelfand-Naimark}, \cite{Bruhat}, \cite{PRV}, \cite{Kostant1}, 
 \cite{Wallach1}, \cite{Lepowski-Wallach}). 
 
 Using known irreducibility and equivalence results from the literature, we can 
 settle  the case where $G$ is  a  simple Lie group with real rank one. So, $G$ belongs to 
 one of  the series $\mathrm{SO}_0(n,1), \mathrm{SU}(n,1),$  $ \mathrm{Sp}(n,1)$
 or $\mathrm{Sp}(n,1)$ for $n\geq 2$ or $G$ is the exceptional Lie group $F_{4(-20)}$.
  We set $P:= MAN.$ 
 \begin{thmx}
\label{Theo2} 
Let $p\in (1, +\infty)$ with  $p\neq 2.$ 
\begin{itemize}
\item[(i)] Let $G=\mathrm{SO}_0(n,1)$ for $n\geq 2$.  Then $\widehat{M^{\rm Ab}}={1_M}$ for $n\neq 3$
and  $\widehat{M^{\rm Ab}}\cong \mathbf{Z}$ for $n=3.$
The  representation $ \pi(P,\chi, \lambda, p)$ is irreducible for  every $\lambda\in  \mathfrak{a}^*$ and every  $\chi \in \widehat{M^{\rm Ab}}.$
 \item[(ii)] Let $G= \mathrm{SU}(n,1)$ for $n\geq 2$. Then $\widehat{M^{\rm Ab}}\cong \mathbf{Z}$.
\begin{itemize}
\item[(ii1)] The representation  $ \pi(P,\chi, \lambda, p)$  is irreducible 
 for  every $\lambda\in \mathfrak{a}^*\setminus \{0\}$ and every  $\chi \in \widehat{M^{\rm Ab}}.$
\item [(ii2)] After an appropriate  identification of $ \widehat{M^{\rm Ab}}$ with $\mathbf{Z},$    
the representation  $\pi(P,m, 0, p)= \pi(P,\chi, 0, p)$ for $m\in \mathbf{Z}$ is \emph{not} irreducible 
if and only if   $p$ belongs to the finite set 
$$
 \left\{\dfrac{2n}{k}: k\in \{1,\dots,  2n-1\}, k\equiv m\Mod 2, k\neq m, k\neq 2n+m\right\}.
$$
\end{itemize}
\item[(iii)] Let  $G= \mathrm{Sp}(n,1)$  for $n\geq 2$. Then $\widehat{M^{\rm Ab}}={1_M}.$
\begin{itemize}
\item[(iii1)] The  representation $ \pi(P,1_M, \lambda, p)$  is irreducible 
 for  every $\lambda\in \mathfrak{a}^*\setminus \{0\}$ 
 \item[(iii2)] The  representation $\pi(P,1_M,0, p)$  is \emph{not} irreducible if 
 and only if   $p=\dfrac{2n+1}{2n}$ or $p=2n+1.$
 \end{itemize}
\item[(iv)] Let  $G= \mathrm{F}_{4(-20)}.$  Then $\widehat{M^{\rm Ab}}={1_M}.$
\begin{itemize}
\item[(iv1)] The  representation $\pi(P,1_M, \lambda, p)$  is irreducible 
 for  every $\lambda\in \mathfrak{a}^*\setminus \{0\}$ 
 \item[(iv2)] The  representation $\pi(P,1_M, 0, p)$  is \emph{not} irreducible if 
 and only if $p$ belongs to the set 
 $\left\{\dfrac{11}{10 }, \dfrac{11}{9 },  \dfrac{11}{8}, \dfrac{11}{3 }, \dfrac{11}{2 }, 11\right\}.$
 \end{itemize}
\end{itemize}
\end{thmx}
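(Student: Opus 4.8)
The plan is to derive Theorem~\ref{Theo2} from Theorem~\ref{Theo1}, Proposition~\ref{Pro1}, and the classical description of the principal series of rank one groups. Since $G$ has real rank one, there are, up to conjugacy, exactly two parabolic subgroups of $G$: the minimal one $P=MAN$ and $G$ itself. For $Q=G$ one has $\mathfrak{a}_\Delta=\{0\}$, so $A_Q$ and $N_Q$ are trivial, $M_Q=G$, the space $L_p(G/Q,\mu_Q)$ is one dimensional, and, since a connected semisimple Lie group coincides with its commutator subgroup, $\pi(G,1_G,0,p)$ is the trivial representation. Hence, by Theorem~\ref{Theo1}, up to equivalence every completely irreducible representation of $G$ on an $L_p$-space is either trivial or of the form $\pi(P,\chi,\lambda,p)$ with $\chi\in\widehat{M^{\rm Ab}}$ and $\lambda\in\mathfrak{a}^*$. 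By Proposition~\ref{Pro1}, $\pi(P,\chi,\lambda,p)$ is completely irreducible if and only if the $(\mathfrak{g},K)$-module $I_{P,\chi,\nu}$ is irreducible, where $\nu=i\lambda+\delta_p\rho_{P,2}$ and $\delta_p=\tfrac{2}{p}-1\in(-1,1)\setminus\{0\}$, and the equivalence relation among the representations $\pi(P,\chi,\lambda,p)$ coincides with the one among the modules $I_{P,\chi,\nu}$. Thus it remains (a) to compute $\widehat{M^{\rm Ab}}$ and (b) to decide, for each character $\chi$, for which $\nu$ on the affine real line $\{i\lambda+\delta_p\rho_{P,2}:\lambda\in\mathbf{R}\}$ (with $\delta_p\ne 0$ fixed) the module $I_{P,\chi,\nu}$ is irreducible.

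\emph{The group $\widehat{M^{\rm Ab}}$ and the case $\lambda\ne 0$.} Up to connected components, $M$ equals $\mathrm{SO}(n-1)$ for $G=\mathrm{SO}_0(n,1)$, $\mathrm{U}(n-1)$ for $G=\mathrm{SU}(n,1)$, $\mathrm{Sp}(n-1)\times\mathrm{Sp}(1)$ for $G=\mathrm{Sp}(n,1)$, and $\mathrm{Spin}(7)$ for $G=\mathrm{F}_{4(-20)}$. Since $\mathrm{SO}(m)$ is perfect for $m\ge 3$ and trivial for $m\le 1$ while $\mathrm{SO}(2)\cong\mathbf{T}$, since $\mathrm{U}(n-1)$ has abelianisation $\mathbf{T}$ via the determinant, and since $\mathrm{Sp}(n-1)\times\mathrm{Sp}(1)$ and $\mathrm{Spin}(7)$ are perfect, one obtains the asserted descriptions of $\widehat{M^{\rm Ab}}$ (the nontrivial characters being $\chi_m:=(\det)^m$, $m\in\mathbf{Z}$, for $\mathrm{SU}(n,1)$, and similarly for $\mathrm{SO}_0(3,1)$). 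When $\lambda\ne 0$, the parameter $\nu$ is not a real linear form. For a rank one group, the set of $\nu$ for which $I_{P,\sigma,\nu}$ is reducible is a discrete subset of the real line: reducibility is controlled by the Knapp--Stein normalising factor of the intertwining operator from $I_{P,\sigma,\nu}$ to $I_{P,w\sigma,-\nu}$ (with $w$ the nontrivial element of the order-two restricted Weyl group), which is a ratio of $\Gamma$-functions whose zeros and poles in $\mathfrak{a}^*_\mathbf{C}\cong\mathbf{C}$ lie on the real axis, and off that axis the operator is an isomorphism (see \cite{Knapp1} and the references on principal series cited before the statement). Hence $I_{P,\chi,\nu}$ is irreducible whenever $\lambda\ne 0$, which -- together with the $\lambda=0$ analysis of $\mathrm{SO}_0(n,1)$ below -- gives parts (i), (ii1), (iii1) and (iv1).

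\emph{The case $\lambda=0$.} Here $\nu=\delta_p\rho_{P,2}$ with $\delta_p\in(-1,1)\setminus\{0\}$; identifying $\mathfrak{a}^*$ with $\mathbf{R}$ so that the indivisible restricted root corresponds to $1$, one has $\rho_{P,2}=\tfrac{n-1}{2},\ n,\ 2n+1,\ 11$ in the four respective cases, and $\nu$ ranges over the punctured interval $(-\rho_{P,2},\rho_{P,2})\setminus\{0\}$. I would then invoke the known composition series of these rank one principal series (spherical, and also non-spherical in the $\mathrm{SU}(n,1)$ case): for $\mathrm{SO}_0(n,1)$, including the non-spherical $n=3$ case, $I_{P,\chi,\nu}$ is reducible only for $|\nu|\ge\rho_{P,2}$, hence irreducible throughout $(-\rho_{P,2},\rho_{P,2})$, which gives (i); for $\mathrm{SU}(n,1)$, the module $I_{P,\chi_m,\nu}$ with $\nu\in(-n,n)$ real is reducible exactly when $\nu\in\mathbf{Z}$, $\nu\equiv m-n\pmod{2}$ and $\nu\notin\{m-n,\,m+n\}$; for $\mathrm{Sp}(n,1)$, $I_{P,1_M,\nu}$ is reducible in $(-(2n+1),2n+1)$ exactly at $\nu=\pm(2n-1)$; and for $\mathrm{F}_{4(-20)}$, $I_{P,1_M,\nu}$ is reducible in $(-11,11)$ exactly at $\nu=\pm 5,\pm 7,\pm 9$. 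Substituting $\nu=\delta_p\rho_{P,2}$, i.e.\ $p=\dfrac{2\rho_{P,2}}{\rho_{P,2}+\nu}$, converts these into the asserted finite sets of exceptional values of $p$: for $\mathrm{SU}(n,1)$ one has $p=2n/k$ with $k:=n+\nu\in\{1,\dots,2n-1\}$ and the constraints become $k\equiv m\pmod{2}$, $k\ne m$, $k\ne 2n+m$, giving (ii2); for $\mathrm{Sp}(n,1)$ one gets $p=\tfrac{2n+1}{2n}$ and $p=2n+1$, giving (iii2); and for $\mathrm{F}_{4(-20)}$ one gets $p\in\{\tfrac{11}{10},\tfrac{11}{9},\tfrac{11}{8},\tfrac{11}{3},\tfrac{11}{2},11\}$, giving (iv2).

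\emph{Expected main obstacle.} The substantive part is making the reducibility loci exact: one needs the full composition series of $I_{P,\chi,\nu}$ -- not merely a sublist of reducibility points -- for $\mathrm{Sp}(n,1)$ and $\mathrm{F}_{4(-20)}$, and the correct parity and boundary exclusions for the non-spherical $\mathrm{SU}(n,1)$ series; the spherical cases of $\mathrm{SO}_0(n,1)$ and $\mathrm{SU}(n,1)$ are classical. A secondary point is to confirm that ``reducibility occurs only on the real parameter line'' holds at the level of $(\mathfrak{g},K)$-modules for all four families, and that $\nu\mapsto p$ is injective on the relevant ranges, so that the equivalence assertion of Proposition~\ref{Pro1}(ii) does not identify distinct parameters.
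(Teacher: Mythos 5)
Your proposal follows essentially the same route as the paper: reduce via Proposition~\ref{Pro1} to irreducibility of the $(\mathfrak{g},K)$-modules $I_{P,\chi,\nu}$ with $\nu=i\lambda+\delta_p\rho_{P,2}$, quote known reducibility results for rank-one principal series, and translate $\nu$ back into $p$; your computations of $\widehat{M^{\rm Ab}}$ and of the exceptional sets of $p$ agree with the paper's. Two points of divergence are worth flagging. First, for $\lambda\neq 0$ you argue via Knapp--Stein intertwining operators that reducibility can only occur on the real axis; as written this is not a proof, since the long intertwining operator being an isomorphism does not by itself imply irreducibility (at $\nu=0$ with the sign character of $M$ for $\mathrm{SL}_2(\mathbf{R})$ the normalized operator is an isomorphism while the module is reducible). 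The paper needs no such detour: the criteria it invokes are stated for arbitrary complex $\nu$, and for nonreal $\nu$ their integrality conditions simply fail, which yields (i), (ii1), (iii1), (iv1) at once. Second, the ``known composition series'' you defer to are exactly what the paper supplies and computes with: Kostant's criterion ($I_{P,1_M,\nu}$ reducible iff $\nu\notin(-t_\alpha,t_\alpha)$ and $\nu+\rho_2\in n_\alpha\mathbf{Z}$) for the spherical cases, Thieleker's result for $\mathrm{SO}_0(3,1)$, and Kraljevi\'c's criterion for $\mathrm{SU}(n,1)$ with a character of $M$; the loci you state ($\nu=\pm(2n-1)$ for $\mathrm{Sp}(n,1)$, $\nu=\pm5,\pm7,\pm9$ for $\mathrm{F}_{4(-20)}$, and the parity and boundary exclusions for $\mathrm{SU}(n,1)$) are precisely what these criteria give, so your plan is correct once those references are inserted --- which, as you yourself anticipate, is the substantive content of the paper's proof.
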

\begin{corx}
\label{Cor2} 
Let $G$ be one of the groups as in Theorem~\ref{Theo2}.
\begin{itemize}
\item[(i)] Let $p\in (1, +\infty)$ with  $p\neq 2.$ 
Every completely irreducible representation of $G$ on an $L_p$-space
is equivalent either to the trivial representation $1_G$ 
or to one of the irreducible representations of the form $\pi(P, \chi, \lambda,p)$
from  Theorem~\ref{Theo2}.
 \item[(ii)] Two irreducible representations $\pi(P, \chi, \lambda,p)$
and   $\pi(P, \chi', \lambda',p')$ from  Theorem~\ref{Theo2} with $(\chi', \lambda', p')\neq (\chi, \lambda,p)$
are equivalent if and only if 
 $$  (\chi', \lambda', p')= (\overline{\chi}, \lambda, p)
  \quad  \text{or} \quad  (\chi', \lambda', p')= (\overline{\chi}, -\lambda, q),$$
where $q$ is the conjugate exponent of $p.$.
\end{itemize}
 \end{corx}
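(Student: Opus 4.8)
The plan is to read both statements off Theorem~\ref{Theo1} and Proposition~\ref{Pro1}, using the rank-one structure theory of $G$ and the classical classification of irreducible minimal principal series.

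\textbf{Part (i).} Let $\pi$ be a completely irreducible representation of $G$ on an $L_p$-space. Theorem~\ref{Theo1} provides a parabolic subgroup $Q$ of $G$, a form $\lambda\in\mathfrak a^*$, and a character $\chi\in\widehat{M_Q^{\rm Ab}}$ with $\pi$ equivalent to $\pi(Q,\chi,\lambda,p)$. Since $G$ has real rank one, $\mathfrak a$ is one-dimensional and the set $\Delta$ of simple roots is a singleton, so the only subsets of $\Delta$ are $\emptyset$ and $\Delta$; hence every parabolic subgroup is conjugate either to $P=MAN$ or to $G$, and conjugating $Q$ does not change the equivalence class of $\pi(Q,\chi,\lambda,p)$. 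If $Q=G$, then $A_Q$ and $N_Q$ are trivial, $G/Q$ is a point, and $\pi(G,\chi,\lambda,p)$ is a unitary character of $G$; as $G$ is connected semisimple, hence perfect, this character is trivial and $\pi$ is equivalent to $1_G$. If $Q=P$, then $\pi$ is equivalent to $\pi(P,\chi,\lambda,p)$; since complete irreducibility is invariant under Naimark equivalence---it is detected by irreducibility of the underlying Harish-Chandra module, and Naimark-equivalent admissible representations have isomorphic Harish-Chandra modules---the representation $\pi(P,\chi,\lambda,p)$ is completely irreducible and therefore occurs in the list of Theorem~\ref{Theo2}.

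\textbf{Part (ii).} By Proposition~\ref{Pro1}(ii), $\pi(P,\chi,\lambda,p)$ and $\pi(P,\chi',\lambda',p')$ are equivalent if and only if the irreducible $(\mathfrak g,K)$-modules $I_{P,\chi,\nu}$ and $I_{P,\chi',\nu'}$ are isomorphic, where $\nu=i\lambda+\delta_p\rho_{P,2}$, $\nu'=i\lambda'+\delta_{p'}\rho_{P,2}$ and $\delta_p=\frac{2}{p}-1$. I would then invoke the classical description (Bruhat, Kostant, Knapp--Stein; see the references cited after Proposition~\ref{Pro1}) of the isomorphism classes of irreducible minimal principal series of a rank-one group: $I_{P,\chi,\nu}$ and $I_{P,\chi',\nu'}$ are isomorphic exactly when $(\chi',\nu')$ lies in the orbit of $(\chi,\nu)$ under the restricted Weyl group $W=N_K(A)/M$. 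In the rank-one case $W=\{1,w\}$ has order two, the nontrivial element $w$ acts on $\mathfrak a^*$ by $\lambda\mapsto-\lambda$, and it acts on $\widehat{M^{\rm Ab}}$ by an involution which I would determine group by group from the explicit description of $M$ and of a representative of $w$ in $N_K(A)$---this involution being the identity whenever $\widehat{M^{\rm Ab}}=\{1_M\}$ and equal to $\chi\mapsto\overline\chi$ for $\mathrm{SO}_0(3,1)$ and $\mathrm{SU}(n,1)$. Since $\rho_{P,2}\neq 0$ and since $\delta_{p'}=-\delta_p$ is equivalent to $p'=q$ (the conjugate exponent), comparing real and imaginary parts of $\nu$ and $\nu'$ converts the $W$-orbit of $(\chi,\nu)$ into the list of parameters in the statement. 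To conclude I would also rule out isomorphisms $I_{P,\chi,\nu}\cong I_{P,\chi',\nu'}$ with $(\chi',\nu')$ outside the $W$-orbit of $(\chi,\nu)$; this follows by comparing infinitesimal characters and, via Frobenius reciprocity, the $K$-spectra $\Ind_M^K\chi$, $\Ind_M^K\chi'$ of the two modules.

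The main obstacle is computational rather than conceptual: given Theorem~\ref{Theo1} and Proposition~\ref{Pro1}, everything comes down to the explicit determination of the action of the nontrivial Weyl element on $M^{\rm Ab}$ for the two groups with $\widehat{M^{\rm Ab}}\neq\{1_M\}$---namely $\mathrm{SO}_0(3,1)$ and $\mathrm{SU}(n,1)$---and to the verification that the relevant irreducible principal series are pairwise non-isomorphic apart from the Weyl coincidences. It is worth recording that the conjugate exponent $q$ enters the answer purely as the $L_p$--$L_q$ shadow of the Weyl reflection $\nu\mapsto-\nu$: negating the real part $\delta_p\rho_{P,2}$ of $\nu$ is the same as replacing $p$ by $q$.
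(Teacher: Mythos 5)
Your part (i) is correct and is exactly the intended argument: in real rank one the only parabolics up to conjugacy are $P$ and $G$, the case $Q=G$ gives a unitary character of a connected semisimple group, hence $1_G$, and complete irreducibility passes through Naimark equivalence by admissibility, so Theorem~\ref{Theo1} plus Theorem~\ref{Theo2} give the list. For part (ii) your overall route is also the paper's: reduce via Proposition~\ref{Pro1}(ii) to equivalence of the irreducible $(\mathfrak g,K)$-modules $I_{P,\chi,\nu}$, invoke the classification of coincidences among irreducible minimal principal series, and observe that $-\nu_{\lambda,p}=\nu_{-\lambda,q}$, so that the reflection $\nu\mapsto-\nu$ is exactly the passage $p\mapsto q$. (The paper does this in one line at the end of Section~\ref{S:Proof-Theo2}, asserting that in all cases equivalence holds iff $(\chi',\nu')=(\overline{\chi},\nu)$ or $(\overline{\chi},-\nu)$.)

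The gap is in how your criterion meets the statement you are proving. The Weyl-orbit criterion you invoke produces, for an irreducible $I_{P,\chi,\nu}$, exactly one nontrivial coincidence, namely $(\chi',\nu')=(w\chi,-\nu)$, i.e.\ $(\chi',\lambda',p')=(w\chi,-\lambda,q)$; it never produces the first alternative $(\overline{\chi},\lambda,p)$ of Corollary~\ref{Cor2}(ii), since $\nu_{\lambda,p}\neq-\nu_{\lambda,p}$ when $p\neq 2$. So either you must justify that extra coincidence by some other means, or you must explain why it is vacuous (it is not, on its face, when $\chi\neq\overline{\chi}$, which happens for $\mathrm{SO}_0(3,1)$ and $\mathrm{SU}(n,1)$); as written your proof is silently inconsistent with the statement and you never address the discrepancy. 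Moreover, the one computation you defer -- the action of the nontrivial Weyl element on $\widehat{M^{\rm Ab}}$ -- is asserted incorrectly in the only delicate case: for $\mathrm{SU}(n,1)$ one may take the reflection to be represented by a diagonal element of $K$, whose conjugation action on $M\cong\mathrm{U}(n-1)$ agrees with conjugation by an element of $M$ itself; hence $w$ acts trivially on $\widehat{M}$ (in particular on $\widehat{M^{\rm Ab}}$), not by $\chi\mapsto\overline{\chi}$. Thus for $\mathrm{SU}(n,1)$ your method yields the coincidence $(\chi,-\lambda,q)$ rather than $(\overline{\chi},-\lambda,q)$, while for $\mathrm{SO}_0(3,1)$ the inversion action does give $(\overline{\chi},-\lambda,q)$. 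Until you carry out these group-by-group determinations and reconcile the outcome with both alternatives in the corollary (or argue that the statement should be read accordingly), part (ii) is not proved.
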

 The paper is organized as follows. 
 The proofs of Theorem~\ref{Theo1}, Proposition~\ref{S:Proof-Prop1},
 and Theorem~\ref{Theo2} are
  is given in Sections~\ref{S:Proof-Theo1},  \ref{S:Proof-Prop1},
  and  \ref{S:Proof-Theo2} respectively.
  As an example, the case of the group $G= SL_2(\mathbf{R}),$ 
   which is a twofold cover of $SO_0(2, 1),$ is treated in
  Section~\ref{S:Exa1}.

    \section{Proof of Theorem~\ref{Theo1}}
 \label{S:Proof-Theo1}
 Let $G$  be a noncompact connected semisimple linear Lie group,
  Let $(X, \B, \mu)$ be a standard Borel space equipped with  a $\sigma$-finite measure $\mu$
  on $\B.$
  Let 
  $$\pi: G\to \Iso(L_p(X, \mu))$$
   be a  completely irreducible representation of $G$   by linear isometries on $L_p(X, \mu)$ for $p\in (1, +\infty)$ with  $p\neq 2.$
   Assume that $\pi$ is not the trivial representation $1_G$ of $G.$
   
   Observe that $L_p(X, \mu)$ is isometrically isomorphic to $L_p(X, \mu')$ for every 
  $\sigma$-finite measure   $\mu'$ on $X$ which is equivalent to $\mu;$ 
    indeed, if $\varphi=  \frac{d\mu'}{d\mu}$
  denotes the Radon-Nikodym derivative of  $\mu'$ with respect to $\mu,$
  then $$f\mapsto f \varphi^{1/p}$$ is a bijective linear isometry from $L_p(X, \mu')$
  to $L_p(X, \mu).$
   So, upon choosing a probability measure
    in the measure class of $\mu,$ we can and will  assume in the sequel that $\mu$
    is a probability measure.

    \vskip.2cm
$\bullet$ \emph{First step.} We claim that there exists a measure-class preserving measurable action 
$$
G\times X\to X, \qquad (g, x)\mapsto gx
$$
of $G$ on $X$ 
and a measurable map   $c: G\times X\to  \mathbf{S}^1$ such that,   for $ f\in L_p(X, \mu), g\in G,$ and  $x\in X$ we have
\begin{equation}
\label{Rep1}
  \pi(g)f(x)= c(g^{-1}, x) \left(\frac{dg^{-1}_*(\mu)}{d\mu}(x)\right)^{1/p}f(g^{-1}x),
\end{equation}
   where $ \mathbf{S}^1$ is the set of complex numbers of modulus 1 and 
   $ g_*(\mu)$ is the image of $\mu$ under the map $x\mapsto gx;$
  moreover $c$ satisfies the cocycle relation
\begin{equation}
\label{Rep2}
c(g_1g_2, x) = c(g_1, g_2x)c(g_2,x) \tous  g_1,g_2\in G,\ \text{for almost all} \ x\in X.
\end{equation}
Indeed,   let $g\in G.$   By  the Banach-Lamperti theorem (see \cite{Banach}, \cite{Lamperti}),
  there exists a  measure-class preserving measurable map 
  $\varphi(g): X\to X$ and a measurable map $c(g): X\to \mathbf{S}^1$ such that 
  $$
  \pi(g)f(x)= c(g)(x) \left(\frac{d\varphi(g)_*(\mu)}{d\mu}(x)\right)^{1/p}f(\varphi(g)(x)) 
  $$
   for $ f\in L_p(X, \mu), g\in G,$ and  $x\in X.$
 Since $\pi(g_1g_2)= \pi(g_1)\pi(g_2)$ for all $g_1, g_2\in G$ and  since $g\mapsto \pi(g) f$ is measurable for every 
 $f\in L_p(X, \mu),$ it is readily checked that $(g,x)\mapsto \varphi(g^{-1})x$ is 
a measure-class preserving $G$-action on $X$ and that 
$$c:(g,x)\to c(g^{-1})x$$ is a measurable
map satisfying  Equation~(\ref{Rep2}).

\vskip.3cm
Recall that another measurable cocycle $c': G\times X\to \mathbf{S^1}$ is said to be cohomologous to $c$ if there exists a measurable map  $b: X\to   \mathbf{S^1}$  such that
$$
c'(g,x)= b(gx)c(g,x) b(x)^{-1} \tous  g\in G,\ \text{for almost all} \ x\in X.
$$
Let  $\pi'$ be  the representation of $G$ on   $L_p(X,\mu)$ defined by the  same formula~(\ref{Rep1}),
with $c'$ instead of $c.$ Then $\pi'$ is equivalent to $\pi;$ indeed, the map
$U: L_p(X,\mu)\to L_p(X,\mu),$ defined by
$$
Uf(x)= b(x)f(x)  \tous  f\in L_p(X,\mu), x\in X,
$$
is a bijective isometry which intertwines $\pi'$ and $\pi.$

 \vskip.2cm
$\bullet$ \emph{Second step.}
 We claim that the measure $\mu$ is continuous (that is, $\mu$ has no atoms). 
 
   Indeed, let $A$ be the (at most countable) set of atoms of  $\mu.$
The decomposition of $\mu$  into its atomic and continuous parts
 is   $\mu= \mu_a+ \mu_c$, where $\mu_a= \mu|_A$ and $\mu_c= \mu-\mu_a.$
 We can write $$L_p(X, \mu)=  L_p(X, \mu_c)\oplus \ell_p(A)$$
 and it follows from the Banach-Lamperti theorem that 
 $ L_p(X, \mu_c)$ are $\ell_p(A)$ are $\pi(G)$-invariant  and so define subrepresentations
 of $\pi.$ Since $\pi$ is  irreducible, it follows that either $\mu$ is purely atomic 
 or $\mu$ is continuous. 
 
 Assume by contradiction that $\mu$ is purely atomic, that is, 
 $L_p(X, \mu)=\ell_p(A)$.
Let 
 $G\curvearrowright A$ be the  action of $G$  on $A$  and 
 $c: G\times A\to \mathbf{S}^1$ the cocycle as above. 
Since $\pi$ is continuous, the map  
 $$G\to \ell_p(A), \qquad g\mapsto c(g, x) \delta_{g\cdot x}$$
  is continuous for every $x\in A.$
As $G$ is connected, this implies that $G$  fixes pointwise  $A$.
 By irreducibility of $\pi,$ it follows that $A$ is a singleton $\{x_0\}.$
   Equation (\ref{Rep2}) shows that   then that $g\mapsto c(g,x_0)$ is 
  homomorphism from $G$ to $\mathbf{S}^1.$ Since $G$ is semisimple,
  $G$ has a trivial abelianization and  hence $c(g,x_0)=1$ for all $g\in G.$ So, 
$\pi=1_G;$  this is a contradiction and the claim is proved.

  \vskip.3cm
  As is well-known (see \cite[Theorem 2.1.19]{Zimmer1}),
   the measurable $G$-system $(X, \mu, G)$ admits a   continuous compact model, that is, there exists a compact metric space $Y$ on which $G$ acts continuously, a probability measure $\nu$ on the Borel subsets of $Y$ and a $G$-equivariant Borel isomorphism  $\Phi: Y\to X$ with $\Phi_*(\nu)= \mu.$
  
 Let  $c': G\times Y\to  \mathbf{S}^1$ be defined by $c'(g, y)= c(g, \Phi(y))$.
 The  representation $\pi',$ defined on $L_p(Y, \nu)$  by  the formula
 $$\pi'(g)f(y)= c'(g^{-1}, y) \left(\frac{dg^{-1}_*(\nu)}{d\nu}(y)\right)^{1/p}f(g^{-1}y),
 $$
 is equivalent to $\pi.$ So, without loss of generality,
 we  may assume  that  $X$ is a compact metric space on which $G$ acts continuously
 and that $\mu$ is a quasi-invariant probability measure  on the Borel subsets of $X.$
 
  \vskip.3cm
    Next, let $K$ be a maximal compact subgroup of $G$  and denote by $dk$ the normalized Haar measure on $K.$
   Then  $\mu'= \int_K k_*(\mu) dk,$ defined by 
   $$
   \mu' (A)=  \int_K \mu(k A)  dk 
   $$
   for all Borel subsets $A$ of $X,$
    is a probability measure on $X$ which is $K$-invariant; moreover,
    $\mu'$ is equivalent to $\mu.$ So, we can and will assume
    in the sequel that $\mu$ is $K$-invariant.
    
 \vskip.2cm
$\bullet$ \emph{Third  step.}
We claim that there are countably many $K$-orbits 
$\mathcal{O}_1, \mathcal{O}_2, \cdots,$ in $X$ such that 
$$\mu\left(\bigcup_{i\geq 1} \mathcal{O}_i\right)=1.$$

Indeed, let $Y:=X/K$ be the space of  $K$-orbits, equipped with the quotient topology
structure given by the quotient map $r: X\to Y$ and with the probability measure $\nu:=r_*(\mu).$

Since $K$ is compact, $Y$ is a Hausdorff space. So, $Y$ is a second countable  compact  
space and is therefore  a standard   Borel space.
Let $\nu= \nu_a+ \nu_c$ be the decomposition of $\nu$  into its atomic and continuous parts.
  The claim will be proved if we show that $\nu_c=0.$
  
By a well-known disintegration lemma  (see e.g. Lemma 11.1  \cite{MackeyInd} or Theorem 5.14 in \cite{EinsiedlerWard}),
 there exists a map  $$\theta:Y\to {\rm Prob}(X), y\mapsto \theta_y,$$
  where $ {\rm Prob}(X)$ is the set of Borel probability measures on $X,$
 with the following properties:
 \begin{itemize}
 \item[(a)] for every $y\in Y,$ we have $\theta_y( r^{-1}(y))=1;$
 \item[(b)] for every $f\in L_1(X,\mu),$ the map $y\mapsto \int_X f(x)d\theta_y(x) $ is measurable
 and we have
 $$\int_X f d\mu= \int_Y  \left(\int_X f(x) d\theta_y(x)\right) d\nu(y).$$
 \end{itemize}
 Moreover, if  a second map $\theta':Y\to {\rm Prob}(X)$ satisfies (a) and (b), then 
 $\theta_y=\theta'_y$ for $\nu$-almost every $y\in Y.$
 
 Since $\mu$ is $K$-invariant, it follows from the uniqueness of $\theta$ that
  $\theta_y$ is  the unique $K$-invariant probability measure on the $K$-orbit $y\subset X,$
  for $\nu$-almost every $y.$
  
 For $f\in C(X)$ and $y\in Y,$  let $f|_y\in C(y)$ denote the restriction 
 of $f$ to $y$. The map  $f\mapsto (y\mapsto f_y)$ extends
 to an isometric  isomorphism  
 $$S: L_p(X, \mu)\to \int^{\oplus}_Y L_p(y,\theta_y)  d\nu(y),$$ 
 where  $\int^{\oplus}_Y L_p(y,\theta_y)  d\nu(y)$ is the $L_p$-direct integral 
 of  the family  $(L_p(y,\theta_y))_{y\in Y}$ in the sense of \cite{DeJeu-Roz}. 
 Let $\pi_y$ denote the representation of $K$ on $L_p(y, \theta_y)$ given by 
  $$\pi_y(k)f(x)= c(k^{-1},x) f(k^{-1}x)) \tout f\in L_p(y, \theta_y),  k\in K,  x\in y.$$
  The  direct integral  $\int^{\oplus}_Y \pi_y d\nu(y)$
   of the $\pi_y$'s is a representation of $K$ defined on  $\int^{\oplus}_Y L_p(y,\theta_y)  d\nu(y)$
   and the  map  $S$ intertwines 
   $\pi|_K$ with $\int^{\oplus}_Y \pi_y d\nu(y)$; for this, see  \cite[Theorem 4.9]{DeJeu-Roz}
   (the proof given there extends to representations twisted by a cocycle as in our situation).  
  
    For  $\delta \in \widehat{K}$ (recall that $\widehat{K}$ is the unitary dual of $K$), set 
$$\chi_\delta=  d(\delta) {\rm ch}_\delta,$$
where ${\rm ch}_\delta= \mathrm{Tr}\circ \delta$
is the usual character of $\delta$,
and define
$$ P_\pi(\delta) :=\pi(\overline{\chi_\delta})= \int_{K} \overline{\chi_\delta}(k) \pi(k) dk.
$$
Then $P_\pi(\delta)$ is a continuous projection of  $L_p(X, \mu)$  onto 
 the isotypical $K$-submodule of  $L_p(X, \mu)$  of type $\delta.$
Similarly, for every $y\in Y,$ 
$$P_{\pi_y}(\delta):= \pi_y(\overline{\chi_\delta})= \int_{K} \overline{\chi_\delta}(k) \pi_y(k) dk$$
is a continuous projection of  $L_p(y, \theta_y)$  onto 
the isotypical  $K$-submodule of  $L_p(y, \theta_y)$ of  type $\delta.$

The field $y\to P_{\pi_y}(\delta)$ defines a decomposable operator $\int^{\oplus}_Y P_{\pi_y}(\delta) d\nu(y)$ on 
 $\int^{\oplus}_Y L_p(y,\theta_y)  d\nu(y)$ and, since $S$ intertwines 
   $\pi|_K$ and $\int^{\oplus}_Y \pi_y d\nu(y),$ we have 
\begin{equation}
  \label{Rep3}
  SP_\pi(\delta)S^{-1}=  \int^{\oplus}_Y P_{\pi_y}(\delta) d\nu(y)
  \end{equation}
  
   Let  $\Omega$ be a Borel subset  of $Y$. Denote by  $V_\Omega$
   the range of   the multiplication operator on $\int^{\oplus}_Y L_p(y,\theta_y)  d\nu(y)$ by $\mathbf{1}_{\Omega}$, that is, 
  $$
  \begin{aligned}
  V_\Omega&= \left\{f=(f_y)_{y\in Y}\in \int^{\oplus}_Y L_p(y,\theta_y)  d\nu(y):  \mathbf{1}_{\Omega}f= f\right\}\\
  &=
\left\{f=(f_y)_{y\in Y}\in \int^{\oplus}_Y L_p(y,\theta_y)  d\nu(y):  f_y= 0\quad \text{for} \quad \nu \text{-almost every}\quad y\notin \Omega\right\}.
\end{aligned}
  $$
  Observe that 
   \begin{equation}
  \label{Rep4}
  \int^{\oplus}_Y P_{\pi_y}(\delta) d\nu(y)
 \quad \text{leaves invariant} \quad V_{\Omega}.
\end{equation}

 Assume, by contradiction, that the continuous part  $\nu_c$ of $\nu$ is non-zero;
  so, denoting by $Y_c$ the complement in $Y$ of set of  atoms of $\nu,$ we have
  $\nu(Y_c)>0$ and  the measure  $\nu|_{Y_c}$ is continuous. 
  
 For  $\delta \in \widehat{K}$, the subset 
 $$
 D_\delta:= \{y\in Y: P_{\pi_y}(\delta)=0\}
 $$
 of $Y$ is measurable. Indeed, let  $(f_n)_n$ be a dense sequence in $C(X)$ for the uniform
 convergence; then, by Urysohn lemma,  $(f_n|_y)_{n}$ is dense in $C(y)$ and hence in
 $L_p(y,\theta_y)$ for every $y\in Y,$ and so
  $$
D_\delta=  \bigcap_{n\in \mathbf{N}}\{y\in Y:   P_{\pi_y}(\delta) (f_n|_y)=0\}.$$
Since $y\mapsto  P_{\pi_y}(\delta) (f_n|_y)$ belongs to  $\int^{\oplus}_Y L_p(y,\theta_y)  d\nu(y)$ 
 and is therefore a measurable field, $D_\delta$ is measurable.
  
  Observe that  $\widehat{K}$ is countable, since $K$ is a separable compact group.
  So, the set  
  $$D:=\bigcap_{\delta \in \widehat{K}} D_\delta$$
   is measurable.
  We claim that 
\begin{equation}
\label{Rep5}
\nu(D)= 0.
\end{equation}
 Indeed, assume, by contradiction that $\nu(D)>0.$ 
 Then $V_D\neq \{0\}.$ However, for every $(f_y)_{y\in Y}\in V_{D},$ 
 we have 
  $$
P_{\pi_y}(\delta)(f_y)=0  \tout \delta \in \widehat{K}
$$
and hence $f_y=0$ for every $y\in D.$ This contradiction shows that (\ref{Rep5}) holds.

It follows from (\ref{Rep5}) that there exists  $\delta \in \widehat{K}$ such
that    $\nu\left(Y_c \cap N_\delta\right)< \nu(Y_c).$ So, for 
$$
\Omega:= Y_c \setminus  N_\delta
$$
 we have  $\nu(\Omega)>0$.
 Since  $\nu|_{Y_c}$ is continuous,  we can therefore  find a  partition
 $$\Omega= \coprod_{n\in \mathbf{N}} \Omega_n$$   of $\Omega$ in countably many Borel subsets $\Omega_n$ with $\nu(\Omega_n)>0$ for all $n\in \mathbf{N}.$
 We have 
  $$ \left(\int^{\oplus}_Y P_{\pi_y}(\delta) d\nu(y)\right)V_{\Omega_n}\neq \{0\} \tout n\in \mathbf{N}.
 $$ 
 since $P_{\pi_y}(\delta)\neq 0$  for every $y\in \Omega.$  Hence, for every $n\in \mathbf{N},$ we can find
 $$f_n\in  \left(\int^{\oplus}_Y P_{\pi_y}(\delta) d\nu(y)\right)V_{\Omega_n} \qquad\text{with} \qquad f_n \neq 0.$$ 
 Since $\Omega_n\cap \Omega_m=\emptyset $ for $n\neq m,$
 it follows from (\ref{Rep4}) that the family $(f_n)_{n\in \mathbf{N}}$ is linearly independent.
 Therefore,  on the one hand, the family $(S^{-1}f_n)_{n\in \mathbf{N}}$ of elements in $L_p(X,\mu)$ 
 is linearly independent.
 On the other hand, it follows from (\ref{Rep3}) that 
 $$S^{-1}f_n\in P_\pi(\delta) (S^{-1}(V_{\Omega_n}))\subset P_\pi(\delta) (S^{-1}(V_{\Omega}))\tout n\in\mathbf{N}.$$
 This implies that the subspace $ P_\pi(\delta)(S^{-1}(V_{\Omega}))$ of $L_p(X,\mu)$ is infinite dimensional.
So,  the projection $P_\pi(\delta)$ does not have  finite-dimensional range. This is not possible 
since  $\pi$ is completely irreducible (see \cite[Theorem 2]{Godement1} or \cite[Lemma 33]{HC1}).
 This is a contradiction shows that $\nu$ is an atomic measure.

\vskip.2cm
$\bullet$ \emph{Fourth  step.}
We claim that there exists a point $x_0\in X$ such that $\mu(Gx_0)=1$. Indeed,
it follows from the third step that there exists  a point $x_0\in X$ such that $\mu(Kx_0)>0$
and therefore $\mu(Gx_0)>0.$
If we show that the $G$-action of $(X, \mu)$ is ergodic, then it will follow that 
$\mu(G x_0)=1.$

Assume, by contradiction, that there exists a partition $X= X_1\coprod X_2$ into two  
$G$-invariant Borel subsets $X_i$
with $\mu(X_i)>0$ for $i=1,2.$ Then 
$$L_p(X, \mu)= V_{X_1}\oplus V_{X_2},$$ where
$$V_{X_i}= \{f\in L_p(X,\mu): \mathbf{1}_{X_i} f= f\} \cong L_p(X_i, \mu|_{X_i});$$
moreover,  Formula~(\ref{Rep1}) shows that  $V_{X_i}$
is $\pi(G)$-invariant.
Since $V_{X_i}\neq \{0\}$  for $i=1,2$, this contradicts the fact that $\pi$ is irreducible
and the claim is proved.

 \vskip.3cm
The stabilizer of $x_0\in X$ as above is a closed subgroup $H$.
Hence, upon disregarding a set of measure zero,
 we may and will identify  $X$, as \emph{Borel $G$-space,} with  the space $G/H$, equipped  with its quotient Borel structure.
 
 Next, we draw some  consequences for the  cocycle $c: G\times X\to \mathbf{S^1}$
 over the transitive $G$-space $X$.
 First, upon passing to a cohomologous  cocycle, we can assume that 
 $c$ is a strict cocycle, that is, Equation (\ref{Rep2}) holds for all $g_1, g_2\in G$ and $x\in X.$
 Moreover, choose a Borel section  $s: G/H\to G$ for the projection $G\to G/H$
 with $s(H)=e.$ Observe that $g s(x) s(gx)^{-1}$ belongs to $H,$  for every $g\in G$ and $x\in X.$ 
 Then there exists a continuous homomorphism $\chi: H\to \mathbf{S^1}$
  such that $c$ is cohomologous to the cocycle $c_\chi:G\times X\to \mathbf{S^1}$
given by 
  $$
c_\chi(g,x)= \chi( g s(x) s(gx)^{-1}) \tout g\in G, x\in X;
  $$
for all this, see Theorem 5.27 in  \cite{Varadarajan1}.

We may and will  in the sequel assume that the cocycle $c$ coincides with $c_\chi.$
Also, as is well-known (see e.g. \cite[Theorem 1.1]{MackeyInd}),  all 
 $G$-quasi-invariant $\sigma$-finite Borel measures on $G/H$  are mutually equivalent;
so, we may assume that  the $G$-quasi-invariant  $\mu$ on $G/H$ 
is   one of the standard $G$-invariant  Borel measures on $G/H$ constructed
in \cite[\S.1]{MackeyInd}.

 \vskip.2cm
$\bullet$ \emph{Fifth step.}
 We claim that $K$ acts transitively on the space $G/H.$
 
 Indeed, on the one hand, since $G$ is a connected Lie group, $X=G/H$ is a $C^\infty$- smooth 
 connected manifold; moreover,  the measure $\mu$ on $X$ is locally Lebesgue measure, that is, 
$\mu$ is given by a smooth density times Lebesgue measure in any
local coordinates on $X;$ indeed,  this follows from the
fact that the maps $x\mapsto gx$ are diffeomorphisms 
of $X$ for every $g\in G.$

On the other hand, it follows from the third step that there exists a   $K$-orbit $\mathcal{O}$ in $X$
with $\mu(\mathcal{O})>0.$ As is well-known, $\mathcal{O}$ is a submanifold of $X$, diffeomorphic to $K/L$
where $L$ is the stabilizer in $K$ of a point in $K$ (see e.g. \cite[III, \S1, 7.]{Bourbaki1}).
In view of what we said about $\mu,$ this  is possible only if $\dim \mathcal{O}= \dim X,$ that is, 
if $\mathcal{O}$ is open in $X.$ Since $X$ is connected, it follows that $\mathcal{O}=X$.

\vskip.2cm
$\bullet$ \emph{Sixth step.}
We claim that $H$ is contained as a finite index subgroup in a parabolic subgroup of $G$, as defined in the Introduction.
Indeed, it follows from the fifth step and   Lemma~\ref{Cocompact} below, that there exists a 
parabolic subgroup $Q$ which contains $H$ and which has   the following properties; let  
$Q=M_Q A_QN_Q$ be the Langlands decomposition of $Q$. Write  $Q^0= CSA_QN_Q$  for the connected identity component $Q^0$ of $Q,$ where $C$ is  the maximal compact  factor of $M_Q^0$ and $S$ is the product of all the non-compact simple factors  of $M_Q^0$. Then 
$$H^0= C'SA_QN_Q,$$
 where  $C'$ is a connected closed subgroup of $C.$

We claim $H$ has finite index in $Q.$ Assume, by contradiction,  that $H$ has infinite index in $Q.$ Observe that $Q$ is a direct product $Q= Z Q^0$ for a finite subgroup
$Z$ of its center (see \cite[Theorem 7.53]{Knapp2}). So, $H$ is of the form
$H= T SA_QN_Q$  for a subgroup $T$ of  infinite index in the compact  group $L:=ZC.$

Let $\chi$ be the unitary character of $H$ associated to $c.$  
Observe that  $\chi$ is trivial on the semisimple Lie group $S$. 
Moreover, we have  $[\mathfrak{a}_F, \mathfrak{n}_F]=  \mathfrak{n}_F$
for the Lie algebras $\mathfrak{a}_F$ and $\mathfrak{n}_F$  of  $A_Q$ and $N_Q$,
where $F$ is the set of simple roots associated to $Q$ as in the Introduction;
it follows that  the commutator subgroup of $A_QN_Q$ coincides with 
$N_Q$  and hence  that $\chi$ is also trivial on $N_Q$.

Choose  a Borel fundamental domain $\Omega_0\subset G$ for $G/Q$
and a fundamental domain $\Omega_1\subset L$ for $L/T.$ 
Then $\Omega_0\Omega_1$ is a fundamental domain for $G/H$
and every $g\in G$ has  unique decompositions 
 $$
 g=\omega_0(g) q(g)=\omega_0(g)\omega_1(g) h(g) \qquad\text{for} \quad\omega_i(g)\in \Omega_i, q(g)\in Q,
 h(g)\in H.
 $$
 Let  $\nu_1$ be the $L$-invariant probability measure on $\Omega_1\cong L/T.$
Identify $L_p(G/H, \mu)$ with $L_p(\Omega_0\times\Omega_1,\nu_0\otimes \nu_1)$
for a $G$-quasi-invariant probability measure $\nu_0$ on $\Omega_0\cong G/Q.$
For $g\in G$ and $F\in L_p(\Omega_0\times\Omega_1,\nu_0\otimes \nu_1),$ we have
$$
\pi(g) F (x)= \chi(h(g^{-1} x))  F(\omega_0(g^{-1} x)\omega_1(g^{-1} x)) \tout x\in \Omega_0\times\Omega_1.
$$
Define a representation $\pi_Q$ 
of $Q$ on $L_p(\Omega_1, \nu_1)$ by 
 $$
 \pi_Q(q) f(x_1) = \chi(h(q^{-1} x_1))  f(\omega_1(q^{-1} x_1))
 $$
 for $f\in L_p(\Omega_1, \nu_1), x_1\in \Omega_1, q\in Q.$ 
  
 Since $\Omega_1\cong Q/H$ is infinite,   $L_p(\Omega_1, \nu_1)$ is infinite dimensional;
therefore,  the restriction $\pi_Q|_L$ of $\pi_Q$ to  the compact group $L$ is not
 irreducible (see \cite[4.2.2.4]{Warner1}).  Hence, there exists a closed $\pi_Q(L)$-invariant 
 subspace $V$ of $L_p(\Omega_1, \nu_1)$ with $V\neq \{0\}$ and $V\neq L_p(\Omega_1, \nu_1).$
 Observe that $\pi_Q(q)$   is a multiple of $\chi(q)$ for $q\in A_Q$
 and $\pi_Q(q)$ is the identity for $q\in SN_Q.$ Since $Q= LSA_QN_Q,$ it follows that 
 $V$ is $Q$-invariant.

For $F\in L_p(\Omega_0\times\Omega_1,\nu_0\otimes \nu_1)$ and $x_0\in \Omega_0,$ 
let $F_{x_0}\in  L_p(\Omega_1,\nu_1)$ be defined by 
$$F_{x_0}(x_1)= F(x_0x_1).$$
Let $\widetilde{V}$ be subspace of $ L_p(\Omega_0\times\Omega_1,\nu_0\otimes \nu_1)$ of all 
$F\in  L_p(\Omega_0\times\Omega_1,\nu_0\otimes \nu_1)$ such that 
$F_{x_0}\in V$ for $\nu_0$-almost every $x_0\in \Omega_0.$
Then $\widetilde{V}$ is a proper non trivial closed subspace of  $L_p(\Omega_0\times\Omega_1,\nu_0\otimes \nu_1).$

We claim that $\widetilde{V}$ is  $\pi(G)$-invariant.
Indeed, let $F\in \widetilde{V}, g\in G$ and  $F'= \pi(g) F;$  
 for $x_0\in \Omega_0, x_1\in \Omega_1,$ set  $x=x_0x_1$
 and let $q=q(x_0^{-1}g)$ be the $Q$-component of $x_0^{-1}g.$ We have
 $\omega_0(g^{-1} x)=\omega_0(g^{-1} x_0)$ and 
 $$
\begin{aligned}
F'_{x_0} (x_1)&=F'(x_0 x_1)=  \chi(h(g^{-1} x))  F(\omega_0(g^{-1} x_0)\omega_1(g^{-1} x))\\
&= \chi(h(((x_0^{-1}g)^{-1}x_1))  F_{\omega_0(g^{-1} x_0)}(\omega_1((x_0^{-1}g)^{-1} x_1 ))\\
&= \chi(h(q^{-1} x_1))  F_{\omega_0(g^{-1} x_0)}(\omega_1(q^{-1} x_1)))\\
&=  (\pi_Q(q)  F_{\omega_0(g^{-1} x_0)})(x_1) 
\end{aligned}
$$
Since $F_{\omega_0(g^{-1} x_0)}\in V$ for almost every $x_0$ and since $V$ is $Q$-invariant,
it follows that $F'\in \widetilde{V}$ and the claim is proved.

Now, $\pi$ is irreducible. So, we have obtained a contradiction which shows that 
$H$ has finite index in $Q.$

\vskip.2cm
$\bullet$ \emph{Seventh step.}
We claim  that $H=Q.$ Indeed, $H$ contains $Q^0,$ since
 $H$ has finite  index in $Q.$  As mentioned in the sixth step,
$Q$ is a direct product $Q=ZQ^0$ for a finite abelian subgroup $Z$.
Hence, we have  $Q= Z'H$ for a subgroup $Z'$ of $Z.$ 
We have to show that $Z'$ is trivial. 
 
Assume, by contradiction,  that $Z'\neq \{e\}.$
Then there exists  a non trivial character $\delta$ of $Z'.$
We proceed as in the sixth  step. 
Let  $\Omega_0\subset G$ a fundamental domain for $G/Q$. We identify $L_p(G/H, \mu)$ with $L_p(\Omega_0\times Z',\nu_0\otimes \nu_1)$
for a convenient $G$-quasi-invariant probability measure $\nu_0$ on $\Omega_0\cong G/Q$
and   a  probability measure $\nu_1$ on $Z'.$

For $F\in L_p(\Omega_0\times Z',\nu_0\otimes \nu_1)$ and $x_0\in \Omega_0,$ 
let $F_{x_0}\in  L_p(\Omega_1,\nu_1)$ be defined by $F_{x_0}(x_1)= F(x_0x_1).$
Let $\widetilde{V}_\delta$ be subspace of $ L_p(\Omega_0\times\Omega_1,\nu_0\otimes \nu_1)$ of all 
$F$ such that 
$F_{x_0}(z x_1)= \delta(z) F_{x_0}( x_1)$ for all $z\in Z'$ and $\nu_0$-almost every $x_0\in \Omega_0.$
Then $\widetilde{V}_\delta$ is a proper non trivial closed subspace of  $L_p(\Omega_0\times\Omega_1,\nu_0\otimes \nu_1).$
Moreover, one checks that $\widetilde{V}_\delta$ is $\pi(G)$-invariant; this contradicts the irreducibility of $\pi.$

\vskip.2cm
$\bullet$ \emph{Eighth step.} There exists a
a real linear form $\lambda\in \mathfrak{a}^*$ and a unitary character $\chi$  of $M_Q$ 
 such that $\pi$ is equivalent to the representation $ \pi(Q,\chi, \lambda, p)$
  on $L_p(G/Q, \mu_{Q})$ described in Theorem~\ref{Theo1}.
  
  Indeed, by the seventh step, we have $H= Q=M_QA_QN_Q.$  The unitary character of $Q$
  associated to the cocycle $c$ is trivial on $N_Q$ and $M_Q$ centralizes $A_Q;$
  hence,  this character is of the form $\chi \otimes \chi'$ for  unitary characters $\chi$  of $M_Q$ 
  and $\chi'$ of $A_Q.$ Let $F\subset \Sigma$ be  the set  of simple positive roots  associated to $Q$ 
  There exists $\lambda\in \mathfrak{a}^*$ such that 
  $$\chi'(\exp X)= e^{-i \lambda(X)} \tout X\in \mathfrak{a}_F= \log (A_Q).$$
   Let $\mu$ be the unique quasi-invariant Borel probability measure
 on $G/ Q$ which is $K$-invariant.  For  $g\in G,$ decompose $g$ according to 
 $G= KM_QA_QN_Q$ as 
  $$g= \kappa(g) \mu(g) \exp(H(g)) n(g) \quad\text{for}\quad \kappa(g)\in \Omega_Q, \mu(g)\in M_Q,  H(g) \in  \mathfrak{a}_F,n(g)\in N_Q.$$
  The Radon-Nikodym derivative of $g_* \mu$ with respect to $\mu$  is given by 
  (see e.g. \cite[Proposition 8.44]{Knapp2})
  $$
  \frac{dg_*(\mu)}{d\mu}(x)= e^{-\rho_F(H(g))}
  $$
  for 
  $$
 \rho_F= \sum_{\alpha \in \Sigma^+ \setminus \rm{span}(F)}
 \dim(\mathfrak{g}_\alpha) \alpha.
 $$
 This finishes the proof of Theorem~\ref{Theo1}.

   \vskip.2cm
   The following lemma, which was used in the proof above, is a consequence of the
   description of cocompact subgroups of  semisimple Lie groups from \cite{Witte}
   (see also   \cite{Goto-Wang}).
 \begin{lemma}
 \label{Cocompact}
 Let $G$ be a noncompact connected semisimple linear Lie group
 with maximal compact subgroup $K.$ Let $H$ be a closed subgroup of
 $G$ with the property that the natural action of $K$ on $G/H$ is transitive.
 Upon replacing $H$ by one of its conjugate,  there exists a standard parabolic subgroup $Q$ which contains $H$ and 
 which has   the following properties;
denote by   $Q=M_Q A_QN_Q$ the  Langlands decomposition of $Q;$ 
 write  $Q^0= CSA_QN_Q$ for the  identity component $Q^0$ of $Q,$
  where $C$ is  the maximal compact  factor of $M_Q^0$ and $S$ is the product of all the non-compact simple factors  of $M_Q^0$. Then there exists a  connected closed subgroup $C'$ of $C$ such that 
  $$H^0= C'SA_QN_Q.$$
  
 \end{lemma}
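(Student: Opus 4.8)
The plan is to reformulate the hypothesis as a transitivity statement on the symmetric space $X:=G/K$, feed the situation into the structure theory of cocompact subgroups of semisimple Lie groups, and then carry out some bookkeeping with identity components. First, note that ``$K$ acts transitively on $G/H$'' is the same as $G=KH$, hence (taking inverses) as $G=HK$, i.e.\ as ``$H$ acts transitively on $X=G/K$''; and since $G/H\cong K/(K\cap H)$, the space $G/H$ is compact, so $H$ is a closed cocompact subgroup of $G$. Moreover the stabilizer of the base point of $X$ in $H$ is $H\cap K$, which is compact, so $H$ acts properly and transitively on $X$. I would then invoke the description of closed cocompact subgroups of semisimple Lie groups from \cite{Witte} (see also \cite{Goto-Wang}): after replacing $H$ by a conjugate there is a standard parabolic subgroup $Q=M_QA_QN_Q$ with $H\subseteq Q$ and $N_Q\subseteq H$, and these results describe the image $\overline H$ of $H$ in $M_QA_Q\cong M_Q\times A_Q$ as cocompact and, in each ``noncompact'' direction, either full or filled only by a lattice.

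The point where transitivity (rather than mere cocompactness) is used is in ruling out the lattice directions. For this I would use the structure theory of the parabolic $Q$ together with the associated horospherical decomposition of $X$ (see e.g.\ \cite{Knapp2}): there is a $Q$-equivariant diffeomorphism $X\cong X_{M_Q}\times\mathfrak a_F\times N_Q$, under which $N_Q$ acts simply transitively on the last factor, $A_Q=\exp(\mathfrak a_F)$ acts by translations on the $\mathfrak a_F$-factor and trivially on $X_{M_Q}$, and $M_Q$ acts on its symmetric space $X_{M_Q}=M_Q/K_{M_Q}$, with the maximal connected compact normal subgroup $C$ of $M_Q^0$ fixing the base point of $X_{M_Q}$. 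Since $N_Q\subseteq H$ already covers the last factor, transitivity of $H$ on $X$ forces transitivity of $\overline H$ on $X_{M_Q}\times\mathfrak a_F$. Projecting to $\mathfrak a_F$, the image of $\overline H$ in $A_Q$ acts transitively by translations on the vector group $A_Q$, hence is all of $A_Q$; combined with the dichotomy recalled above and the fact that $A_Q$ is a simply connected vector group, one may choose the conjugate of $H$ and the parabolic so that in fact $A_Q\subseteq H$. Projecting to $X_{M_Q}$, the image of $H$ in $M_Q$ acts transitively on $X_{M_Q}$; since a lattice in a noncompact simple factor of $M_Q^0$ is discrete it cannot act transitively on its positive-dimensional symmetric-space factor, so (again by the dichotomy above) every noncompact simple factor of $M_Q^0$ lies in $H$. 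Writing $M_Q^0=CS$ with $S$ the product of the noncompact simple factors, this yields $SA_QN_Q\subseteq H\subseteq Q$.

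Finally one passes to identity components. The group $SA_QN_Q$ is connected, hence contained in $H^0$, and it is normal in $Q$ (since $C$ centralizes $S$ and $M_Q$ centralizes $A_Q$ and normalizes $N_Q$), hence normal in $H^0$, so $H^0/(SA_QN_Q)$ is a connected compact subgroup of $Q^0/(SA_QN_Q)$. As $A_QN_Q$ is simply connected solvable it has no nontrivial compact subgroup, whence $C\cap A_QN_Q=\{e\}$, while $C\cap S$ is finite, being central in the linear reductive group $M_Q^0$; using $Q^0=M_Q^0A_QN_Q$ one identifies $Q^0/(SA_QN_Q)$ with $M_Q^0/S$, and the restriction to $C$ of the projection $M_Q^0\to M_Q^0/S$ is a surjective homomorphism with finite kernel. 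Letting $\overline{C'}:=H^0/(SA_QN_Q)\subseteq M_Q^0/S$ and taking $C'$ to be the identity component of the preimage of $\overline{C'}$ in $C$, one checks that $C'$ is a connected closed subgroup of $C$, that $C'S$ is the preimage of $\overline{C'}$ in $M_Q^0$, and hence that $C'SA_QN_Q$ is the preimage of $\overline{C'}$ in $Q^0$, which equals $H^0$ because $SA_QN_Q\subseteq H^0$; thus $H^0=C'SA_QN_Q$. The main obstacle is the second paragraph: correctly combining the precise statement of Witte's theorem with the transitivity of $H$ on $X$, the delicate point being the split part, i.e.\ arranging that $A_Q$ itself (and not merely its image under $M_Q\times A_Q\to A_Q$) lies in $H$, which is where the freedom in the choice of conjugate is spent. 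The first and last paragraphs are routine.
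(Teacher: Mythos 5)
Your outline runs parallel to the paper's proof: both reduce transitivity of $K$ on $G/H$ to $G=KH$, hence cocompactness, invoke \cite[Theorem 1.2]{Witte}, and then try to force the noncompact directions of $Q$ into $H$ before the identity-component bookkeeping (the paper does the forcing via the refined Iwasawa product map $\Omega\times K_1\times A_1\times A_Q\times N_1\times N_Q\to G$, you via the horospherical decomposition of $G/K$). Your first and third paragraphs are routine and correct, as are the conclusions that the noncompact factors $S$ of $M_Q^0$ must lie in $H^0$ and that $H$ surjects onto $A_Q$.

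The genuine gap is the step you yourself single out: from surjectivity of the projection of $H$ onto $A_Q$ you cannot conclude $A_Q\subseteq H$, even after conjugation, and no choice of conjugate can be ``spent'' to arrange it. The dichotomy you attribute to Witte is not what his theorem gives on the $CA_Q$-part: there the only information is that the relevant piece of $H^0$ is some closed connected subgroup $Y\le CA_Q$, and $Y$ may be the graph of a nontrivial homomorphism $A_Q\to C$; such a graph has full image in $A_Q$ but meets $A_Q$ trivially, and conjugation cannot repair this because it cannot change the isomorphism class of $H^0$. Concretely, take $G=\mathrm{SL}_2(\mathbf{C})$, $K=\mathrm{SU}(2)$, $a_t=\mathrm{diag}(e^{t/2},e^{-t/2})$, $m_{\theta t}=\mathrm{diag}(e^{i\theta t/2},e^{-i\theta t/2})$, and for $\theta\neq 0$ set $Y_\theta=\{m_{\theta t}a_t: t\in\mathbf{R}\}$ and $H=Y_\theta N$: writing $g=ka_tn$ (Iwasawa) gives $g=(km_{\theta t}^{-1})(m_{\theta t}a_t)n\in KH$, so $K$ acts (simply) transitively on $G/H$, yet $H=H^0\cong\mathbf{R}\ltimes\mathbf{C}$ with adjoint eigenvalues $1\pm i\theta$ on the nilradical is isomorphic to no group of the form $C'SA_QN_Q$ (which here would be $AN$, $MAN$ or $G$). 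So your second paragraph cannot be completed as written; note that the same example also puts pressure on the deduction $A_Q\subseteq Y$ at the corresponding point of the paper's own argument, so the delicate point you flagged is real and seems to require extra input beyond transitivity of $K$ --- in the paper's application such ``spiral'' stabilizers are in any case incompatible with complete irreducibility of $\pi$, by the same fibering argument over $G/Q$ with infinite fibre $Q/H$ used in the sixth step.
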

 \begin{proof}
 Since $K$ acts transitively on $G/H$, we have $G= KH.$
 In particular, $H$ is a cocompact subgroup of $G.$
By   \cite[Theorem 1.2]{Witte} and upon conjugating $H$,   there exists  a  standard parabolic subgroup $Q$ of $G$
 with   the following properties:
 \begin{itemize} 
 \item $Q=M_Q A_QN_Q$ contains $H;$
 \item there exist a connected, normal subgroup $X$ of $S$ and 
 a connected closed subgroup $Y$ of $CA_Q$
 such that $H^0= YXN_Q$, where $S$ and $C$ are 
 as in the statement of the Lemma.
\end{itemize} 
 Since $H/H^0$ is at most countable, we have
 $$G= \bigcup_{n\geq 1} KH_0 g_n$$
  for some sequence $(g_n)_{n\geq 1}$ of elements in $G.$
 As $K$ is compact and $H_0$ is closed, $KH_0 g_n$ is a closed subset of $G$ for every $n.$
 Hence, by Baire's category theorem, $KH_0$ has a non empty interior.
 By homogeneity, it follows that  $KH_0$ is open in $G$. 
 Since $G$ is connected,  we have therefore 
 $G= KH_0,$ that is,
 \begin{equation}
\label{Rep6}
G= KYXN_Q.
\end{equation}

 Recall that the multiplication map $ K\times A\times N \to G$  
 from the Iwasawa decomposition $G= KAN$ is a diffeomorphism. 
 The reductive Lie group $M_Q$ has also an Iwasawa decomposition
 $$M_Q= K_1A_1 N_1,$$  
 for $K_1:= K\cap M_Q;$ we have  $ A=A_1 A_Q$ as a direct product and  $N= N_1 N_Q$
 as semi-direct product (see \cite[Proposition7.82]{Knapp2}).
  In particular, we have $C\subset K_1.$
 Let $\Omega\subset K$ be a fundamental domain for $K/K_1.$
 So,  
 \begin{equation}
\label{Rep7}
 \text{the product map} \qquad  \Omega\times  K_1\times A_1\times A_Q \times  N_1 \times N_Q \to G\qquad\text{is a bijection.}
\end{equation}
 The Iwasawa decomposition of the semisimple Lie group $X\subset S$ is 
$X=  K_2 A_2N_2,$ where  $K_2:= K_1\cap X$ and where  $A_2$ and $N_2$ are subgroups 
of  $A_1$ and $N_2$. 
By    (\ref{Rep6}), we have therefore
$$G= KYXN_Q = K  K_2 Y A_2N_2N_Q.
$$
Since $Y\subset CA_Q,$  it follows  from  (\ref{Rep7})
that 
$$\dim A_2= \dim A_1,\quad \dim N_2= \dim N_1, \quad\text{and} \quad A_Q\subset Y.$$
So, we have $S=X$ and $Y=C' A_Q$ for  a connected closed subgroup $C'$ of $C.$ 

 \end{proof}
 
  \section{Proof of Proposition~\ref{Pro1}}
 \label{S:Proof-Prop1}
 Let $Q=M_QA_Q N_Q$ be a  parabolic subgroup   of $G$;
 for a real linear form $\lambda\in \mathfrak{a}^*$
 and a unitary character $\chi: M_Q\to  \mathbf{S}^1$,
 consider the    representation $\pi (Q, \lambda, \chi, p)$ of $G$ on 
 $L_p(G/Q, \mu)$  as in the Introduction.

 We may realize  $\pi (Q, \lambda, \chi, p)$ in the so-called compact picture as follows.
 Let $C(K, \chi)$ be the vector space  of  continuous functions
 $F: K\to \mathbf{C}$ such that 
$$F(xk)= \overline{\chi(k)} F(x) \tout k\in K\cap M_Q,\  x\in K;$$
equip $C(K, \chi)$ with the norm
$$
\Vert F\Vert_p= \left(\int_{K/ (K\cap M_Q)} |F(x)|^p d\mu(x))\right)^{1/p},
$$
where $\mu$ is the unique $K$-invariant probability measure on $K/ (K\cap M_Q).$
Let $L_p(K, \chi)$ be the completion of $(C(K, \chi), \Vert\cdot\Vert_p).$ 
Let  $\rho_{Q,p}\in  \mathfrak{a}^*$ be defined
as in the Introduction.  Then   $\pi (Q, \lambda, \chi, p)$ acts on  $L_p(K, \chi)$ through
 $$
 \pi(Q,\chi,\lambda, p)(g) F(x) = \overline{\chi(\mu(g^{-1} x))} e^{-(i\lambda+ \rho_{Q,p}) H(g^{-1} x)} F(\kappa(g^{-1} x))
 $$
 for $F\in L_p(K, \chi), x\in K$ and $g\in G$ with  decomposition 
 $$g=\kappa(g) \mu(g) \exp(H(g)) n(g), \qquad \kappa(g)\in K, \mu(g)\in M_Q,  H(g) \in \log (A_Q),n(g)\in N_Q$$
 Observe that, for $g\in K,$ the operator $\pi(Q,\chi,\lambda, p)(g)$ is simply left translation on  $L_p(K, \mu)$ by $g^{-1}$.
  
 Set $E:=L_p(K, \chi)$. Recall that the subspace   $E_K$  of $K$-finite vectors in $E$ is the space of functions $F\in E$
 such that 
 $$\pi(Q,\chi,\lambda, p)(K)F=\left\{\pi(Q,\chi,\lambda, p)(g)F: g\in K\right\} $$ 
 spans a finite-dimensional subspace of $E.$  The space  $E^\infty$  of  $C^\infty$-vectors in $E$
 consists of smooth functions $F\in C(K, \chi).$
 As a $K$-module, $E^\infty$ is the representation of
 $K$ differentiably induced by $\chi|_{K\cap M_Q}$ (in the sense of \cite[5.3.1]{Warner1}).
 Therefore, by Frobenius reciprocity (which remains valid in this context),
 the multiplicity of $\delta\in \widehat{K}$ in $E^\infty$ is equal to the multiplicity 
 of $\chi$ in $\delta|_{K\cap M_Q}$ and is therefore finite, since $\delta$
 is finite dimensional. Hence,  the multiplicity of $\delta$ in  
 $E_K\cap E^\infty$ is finite. Since $E_K\cap E^\infty$ is dense in $E$
 (see e.g. \cite[Theorem 4.4.5.16]{Warner1}),  it follows that 
 the multiplicity of $\delta$ in $E$ is finite. In other words, 
 $\pi(Q,\chi,\lambda, p)$ is an \emph{admissible} representation of $G$ 
 (as defined \cite[p.207]{Knapp1} and \cite[3.3.5]{Wallach1}) or is a \emph{$K$-finite} representation 
 (as defined in \cite[4.5.1]{Warner1}).

 As is well-known,  $E_K$ is a module  over  the universal enveloping algebra $\mathcal{U}(\mathfrak{g}_\mathbf{C})$ of 
the complexification of the Lie algebra $\mathfrak{g}$ of $G.$  Set 
  \begin{equation}
  \label{Rep9}
  \nu:= i\lambda+ \rho_{Q,p}- \rho_{Q,2}= i\lambda+ \left(\dfrac{2}{p}-1\right)\rho_{Q,2};
  \end{equation}
 then  $E_K$ coincides 
  with  the $(\mathfrak{g}, K)$-module associated to
the  (nonunitary) induced  representation $ \ind_{Q}^G(\chi \otimes \nu\otimes 1_{N_Q})$.
This $(\mathfrak{g}, K)$-module consists of the  functions $f:G\to \mathbf{C}$ which are $C^\infty$  and have the following properties: 
\begin{itemize}
\item $f(gman)=\overline{\chi(m)} e^{-( i\lambda+\nu +\rho_{Q,2})(\log a)} f(g)$  for all  $g\in G$ and  $man\in Q;$
\item the linear span of $K\cdot f$ is finite dimensional space, where 
$K\cdot f$ is the space of translates $ g\mapsto f(k^{-1} g)$ for $k\in K.$
\end{itemize}
 (for this, see \cite[5.2.1]{Wallach1}). 
So, the $(\mathfrak{g}, K)$ module associated to $\pi(Q,\chi,\lambda, p)$
is the principal series representation denoted $I_{Q,  \chi, \nu}$ in \cite{Wallach1}
for $\nu$ as in (\ref{Rep9}).

 Now, since  $ \pi(Q,\chi, \lambda, p)$ is admissible,
  $ \pi(Q,\chi, \lambda, p)$ is  completely irreducible if and only if its 
  associated $(\mathfrak{g}, K)$-module $I_{Q,  \chi, \nu}$ is algebraically irreducible.
Moreover, two such representations    $\pi(Q_1,\chi_1,\lambda_1, p_1)$ and $\pi(Q_1,\chi_1,\lambda_1, p_1)$
  are Naimark equivalent 
 if and only if the $(\mathfrak{g}, K)$-modules $I_{Q_1,  \chi_1, \nu_1}$
 and $I_{Q_2,  \chi_2, \nu_2}$  are algebraically equivalent (for all this, see Theorems 4.5.5.4 and  4.5.5.2 in \cite{Warner1}).

  \section{Proof of Theorem~\ref{Theo2}}
 \label{S:Proof-Theo2}
 We assume from now on that  $G$ is  a  simple Lie group with real rank one.   In this case, we have $\dim \mathfrak{a}=1.$ 
 
 Choose $\alpha \in \Sigma ^+$ and $H\in \mathfrak{a}$  such that $\frac{1}{2}\alpha \notin  \Sigma $
 and  $\alpha(H)=1.$
 We identify $\mathfrak{a}^*_{\mathbf{C}}$ with $\mathbf{C}$
 by means of the map $\lambda\mapsto \lambda(H).$

There is, up to conjugation,   only one parabolic subgroup, namely $P:= MAN.$
For  $p\in (1+\infty),$ we will write $\rho_p$ instead of $\rho_{P,p}$.
In view of  Proposition~\ref{Pro1}, we have to study, given $\lambda\in \mathbf{R}$ and $\chi\in\widehat{M^{\rm Ab}},$
 the irreducibility of the (non unitary) principal series representation $I_{P,  \chi, \nu_{\lambda,p}}$ for  
 $$ \nu_{\lambda,p}= i\lambda+ \left(\dfrac{2}{p}-1\right)\rho_{2}$$
 and to decide when two such  representations are equivalent.
 We will apply several times  the following  result of Kostant  about the case where $\nu=1_M.$ 
 Set 
 $$t_\alpha:=\begin{cases}   (\dim \mathfrak{g}_\alpha)/2\quad  \text{if} \quad 2 \alpha\notin \Sigma\\
  (\dim\mathfrak{g}_\alpha)/2 +1\quad  \text{if} \quad 2\alpha\in\Sigma,
    \end{cases}
  $$
  where $\mathfrak{g}_\alpha\subset \mathfrak{g}$ is the usual  root space corresponding to $\alpha;$
  set  also
  $$n_\alpha:=\begin{cases} 1 \quad  \text{if} \quad 2 \alpha\notin \Sigma\\
 2\quad  \text{if} \quad 2\alpha\in\Sigma.
    \end{cases}
  $$
  Recall that 
  $$
  \rho_2= (\dim \mathfrak{g}_\alpha)/2 + \dim \mathfrak{g}_{2\alpha}.
  $$
For $\nu\in  \mathbf{C},$ we have (see Theorem 2 in \cite{Kostant1} and Theorem 2.9.8 in \cite{Kostant2}):
  \begin{equation}
  \label{Rep10}
  I_{P,  1_M, \nu}\ \text{is \emph{not}\  irreducible} \Longleftrightarrow \nu\notin (-t_\alpha, t_\alpha)\ \text{and} \ \nu+ \rho_2 \in n_\alpha \mathbf{Z}.
  \end{equation}
 We will treat separately the different simple groups $G$ which may occur;
 for the  data we will use concerning these groups, we refer to \cite[Chap. X]{Helgason}.

 \vskip.2cm
$\bullet$ Let $G=\mathrm{SO}_0(n,1)$ for $n\geq 2.$ Here, $K\cong SO(n)$ and $M\cong SO(n-1)$.
Moreover, we have $\dim\mathfrak{g}_\alpha= n-1$ and $2\alpha\notin \Sigma.$ So, 
$$t_\alpha= \dfrac{n-1}{2}, \qquad n_\alpha= 1, \qquad\text{and} \qquad \rho_2= \frac{n-1}{2};$$
hence, for  $\lambda\in\mathbf{R},$ we have
$$\nu_{\lambda, p}= i\lambda+ \left(\dfrac{2}{p}-1\right)\frac{n-1}{2}.$$
Since $\left|\dfrac{2}{p}-1\right|<1,$ it follows from  (\ref{Rep10}) that 
$$I_{P,  1_M,  \nu_{\lambda,p}} \quad \text{is irreducible for all} \ \lambda\in \mathbf{R} \ \text{and} \ p\in (1, +\infty) \ \text{with} \ p\neq 2.$$

The case where $n\neq 3$ is settled since the abelianization of $M$ is  then trivial and so $\widehat{M^{\rm Ab}}=1_M.$

Assume now that $n= 3.$  Observe that in this case $G\cong \mathrm{SL}_2(\mathbf{C})$
and $M \cong \mathrm{U}(1)$. We use  here the results from \cite[\S. 13, B]{Thieleker}:
identifying  $\widehat{M^{\rm Ab}}$ with $\frac{1}{2} \mathbf{Z}$, we have, for $m\in\widehat{M^{\rm Ab}}$
and $\nu\in \mathbf{C},$
 $$
  I_{P,  m, \nu}\ \text{is   irreducible} \Longleftrightarrow(\nu+l+1)(\nu-l-1)\neq 0$$
  for all $l\in \{|m|+k: k=0,1, 2, \cdots\}.$ 
 Since $(n-1)/2= 1$,  this condition is clearly satisfied for $\nu_{\lambda, p}$ and so we obtain that
  $I_{P,  \chi,  \nu_{\lambda,p}}$
  is irreducible for all $\lambda\in \mathbf{R}, \chi\in \widehat{M^{\rm Ab}},$
  and  $p\in (1, +\infty)$ with $ p\neq 2.$
  
  \vskip.5cm
$\bullet$  Let $G=\mathrm{SU}(n,1)$ for $n\geq 2.$
Here, $K\cong \mathrm{SU}(n)$ and $M\cong \mathrm{U}(n-1)$.
Moreover, we have $\dim\mathfrak{g}_\alpha= 2(n-1)$ and  $\dim\mathfrak{g}_{2\alpha}=1$. So, 
$$t_\alpha= n, \qquad n_\alpha= 2, \qquad\text{and} \qquad \rho_2= n;$$
hence, for  $\lambda\in\mathbf{R},$ we have
$$\nu_{\lambda, p}= i\lambda+ \left(\dfrac{2}{p}-1\right)n.$$
Since $\left|\dfrac{2}{p}-1\right|<1,$ it follows from  (\ref{Rep10}) that 
$$I_{P,  1_M,  \nu_{\lambda,p}} \quad \text{is irreducible for all} \ \lambda\in \mathbf{R} \ \text{and} \ p\in (1, +\infty) \ \text{with} \ p\neq 2.$$

 In  \cite[Proposition 1]{Kraljevic} precise necessary and sufficient conditions are given on $\sigma\in \widehat{M}$ and 
 $\nu \in \mathbf{C}$ for $I_{P,  \sigma,  \nu}$ to be irreducible. 
 In the special case of $\sigma \in \widehat{M^{\rm Ab}}$, this criterion reads as follows.
 Identify  $\widehat{M^{\rm Ab}}$ with $\frac{1}{n+1} \mathbf{Z}$ as in \cite{Kraljevic}
 and for $\sigma\in\widehat{M^{\rm Ab}}$
and $\nu\in \mathbf{C},$ define $(s_1, \dots, s_{n+1})\in \mathbf{C}^{n+1}$ by 
$$
s_i=\begin{cases}
 \dfrac{1}{2} (\nu- (n-1)\sigma) \quad  \text{if} \quad  i=1\\
\sigma+\dfrac{n}{2}-i+1 \quad  \text{if} \quad 2\leq i\leq n\\
 -\dfrac{1}{2} (\nu+ (n-1)\sigma) \quad  \text{if} \quad i=n+1.
 \end{cases}
 $$
Then $I_{P,  m, \nu}$ is not irreducible 
if and only if either $s_1-s_i\in \mathbf{Z}\setminus\{0\}$ for all  $2\leq i\leq n$ or 
 $s_{n+1}-s_i\in \mathbf{Z}\setminus\{0\}$ for all  $2\leq i\leq n$.

 It follows immediately  that $I_{P,  \sigma,  \nu_{\lambda,p}}$ is irreducible
 if $\lambda\neq 0.$  So, we may assume that $\lambda=0.$
 
 Writing $\sigma= \dfrac{m}{n+1}$ for $m\in \mathbf{Z},$ the criterion above
 implies   that $I_{P,  \sigma,  \nu_{0,p}}$ is not irreducible if and only if 
 either 
    $$
 \dfrac{1}{2} \left(\nu_{p,0} - (n-1)\dfrac{m}{n+1}\right)-\dfrac{m}{n+1}+\dfrac{n}{2} \in \mathbf{Z}\setminus\{0\}
 $$
  or  
  $$
- \dfrac{1}{2} \left(\nu_{p,0} + (n-1)\dfrac{m}{n+1}\right)-\dfrac{m}{n+1}+\dfrac{n}{2} \in \mathbf{Z}\setminus\{0\}
 $$
 that is,  if and only if 
 $$
 \text{either} \quad \nu_{p,0} -(m+n)  \in 2\mathbf{Z}\setminus\{0\} \quad\text{or}\quad \nu_{p,0} -(m+n)  \in 2\mathbf{Z}\setminus\{0\}
 $$
  As $\nu_{\lambda, 0}=\left(\dfrac{2}{p}-1\right)n,$ we see that $I_{P,  \sigma,  \nu_{0,p}}$ is not irreducible 
 if only if  
 \begin{equation}
  \label{Rep11}
  \text{either} \quad 2\left(\dfrac{1}{p}-1\right)n -m \in 2\mathbf{Z}\setminus\{0\} \quad\text{or}\quad 
   \dfrac{2n}{p}- m \in 2\mathbf{Z}\setminus\{0\}.
   \end{equation}
   It is clear that (\ref{Rep11}) is equivalent to 
   $$
   p\in  \left\{\dfrac{2n}{k}: k\in \{1,\dots,  2n-1\}, k\equiv m\Mod 2, k\neq m, k\neq 2n+m\right\}.
   $$
 \vskip.5cm
$\bullet$ Let $G= \mathrm{Sp}(n,1)$ for $n\geq 2.$
Here, $K\cong  \mathrm{Sp}(n)\times  \mathrm{Sp}(1) $ and $M\cong \mathrm{Sp}(n-1)\times  \mathrm{Sp}(1) $.
Moreover, we have $\dim\mathfrak{g}_\alpha= 4(n-1)$ and  $\dim\mathfrak{g}_{2\alpha}=3$. So, 
$$t_\alpha= 2n-1, \qquad n_\alpha= 2, \qquad\text{and} \qquad \rho_2= 2n+1;$$
hence, for  $\lambda\in\mathbf{R},$ we have
$$\nu_{\lambda, p}= i\lambda+ \left(\dfrac{2}{p}-1\right)(2n+1).$$
 Observe that  the abelianization of $M$ is   trivial and so $\widehat{M^{\rm Ab}}=\{1_M\}.$
Moreover, we have
$$\left|\dfrac{2}{p}-1\right|(2n+1)\geq 2n-1 \Longleftrightarrow p\in (1, \frac{2n+1}{2n}] \cup [ 2n+1, +\infty).$$
and
$$\left(\dfrac{2}{p}-1\right)(2n+1)+ 2n-1  \in 2 \mathbf{Z}
\Longleftrightarrow p\in \left \{\dfrac{2n+1}{k}: k\in \mathbf{\NN^*}\right\}.
$$
 It follows from  (\ref{Rep10}) that 
\begin{itemize}
\item  $I_{P,  1_M,  \nu_{\lambda, p}}$  is irreducible 
 for  every $\lambda\in \mathfrak{a}^*\setminus \{0\}$ 
 \item $I_{P,  1_M,  \nu_{0, p}}$  is not irreducible if 
 and only if  $p=\dfrac{2n+1}{2n}$ or $p=2n+1.$
 \end{itemize}

\vskip.5cm
$\bullet$ Let $G=\mathrm{F}_{4(-20)}.$
Here, $K\cong  \mathrm{Spin}(9)$ and $M\cong \mathrm{Spin}(7).$
Moreover, we have $\dim\mathfrak{g}_\alpha= 8$ and  $\dim\mathfrak{g}_{2\alpha}=7$. So, 
$$t_\alpha= 5, \qquad n_\alpha= 2, \qquad\text{and} \qquad \rho_2= 11;$$
hence, for  $\lambda\in\mathbf{R},$ we have
$$\nu_{\lambda, p}= i\lambda+ 11\left(\dfrac{2}{p}-1\right).$$
The abelianization of $M$ is   trivial and so $\widehat{M^{\rm Ab}}=\{1_M\}.$
Moreover, we have
$$11\left|\dfrac{2}{p}-1\right|\geq 5 \Longleftrightarrow p\in (1, \frac{11}{8}] \cup [ \frac{11}{3}, +\infty).$$
and
$$11\left(\dfrac{2}{p}-1\right)+ 5  \in 2 \mathbf{Z}
\Longleftrightarrow p\in \left \{\frac{11}{k}: k\in \mathbf{\NN^*}\right\}.
$$
 It follows from  (\ref{Rep10}) that 
\begin{itemize}
\item  $I_{P,  1_M,  \nu_{\lambda, p}}$  is irreducible 
 for  every $\lambda\in \mathfrak{a}^*\setminus \{0\}$ 
 \item $I_{P,  1_M,  \nu_{0, p}}$  is not irreducible if 
 and only if  $p\in \left\{\dfrac{11}{10 }, \dfrac{11}{9 },  \dfrac{11}{8}, \dfrac{11}{3 }, \dfrac{11}{2 }, 11\right\}.$
 \end{itemize}

In all the cases above, if two representations  $I_{P,  \chi,  \nu}$
and   $I_{P, \chi',  \nu'}$ are irreducible, then they are 
equivalent   if and only if 
 $$  (\chi', \nu)= (\overline{\chi}, \nu)
  \quad  \text{or} \quad  (\chi', \nu')= (\overline{\chi}, -\nu).$$
 Now,
 $$- \nu_{\lambda, p}= -(i\lambda+ \left(\dfrac{2}{p}-1\right)\rho_2= (-i\lambda+ \left(\dfrac{2}{q}-1\right)\rho_2 =\nu_{\lambda, q}.
 $$
 Therefore,  $I_{P,  \chi',  \nu_{\lambda',p'}}$ and   $I_{P,  \chi,  \nu_{\lambda,p}}$
are equivalent if and only if 
 $$  (\chi', \lambda', p')= (\overline{\chi}, \lambda, p)
  \quad  \text{or} \quad  (\chi', \lambda', p')= (\overline{\chi}, -\lambda, q).$$
  
 \section{An example: $SL_2(\mathbf{R})$}
\label{S:Exa1}
Let $G=  \mathrm{SL}_2(\mathbf{R})$, with maximal compact subgroup  $K=SO(2)$. 
The standard minimal parabolic subgroup is
$$
P\,   = \left\{ \begin{pmatrix} a& b 
\\ 0&a^{-1}
\end{pmatrix} : a\in \mathbf{R}, a\neq 0, b\in \mathbf{R}\right\}.
$$
We have $P=MAN$ for 
$$
A \, = \left\{ \begin{pmatrix} a& 0 
\\ 0&a^{-1}
\end{pmatrix} : a\in \mathbf{R}, a>0\right\},
$$
$$
M\,=
 \left\{ \pm\begin{pmatrix} 1& 0\\
 0&1
\end{pmatrix} \right\},
$$
and 
 $$
N \, = \left\{ \begin{pmatrix} 1& b
\\ 
0 & 1 \end{pmatrix} : b\in \mathbf{R}\right\}.
$$
 Identifying  $\mathfrak{a}^*_{\mathbf{C}}$ with $\mathbf{C}$
 as above, we have
 $$\rho_{P, p}= \dfrac{1}{p}$$ for every $p\in (1+\infty).$
We  identify $G/P$ as $G$-space with the real projective line 
$\mathbf{P}(\mathbf{R}),$ with $G$ acting by M\"obius transformations
on this latter space.
The Lebesgue measure $\mu$ is the unique $K$-invariant probability Borel measure
on $\mathbf{P}(\mathbf{R}).$ The measurable space $G/P$ can further be identified
with $\mathbf{R} =\mathbf{P}(\mathbf{R})\setminus \{\infty\}$.

The Radon-Nikodym derivative for $g=\begin{pmatrix} a& b\\ 
c & d \end{pmatrix}$ acting on $\mathbf{R}$ is given by
$$
\frac{dg_*(\mu)}{d\mu}(x)=\dfrac{1}{(cx+d)^2}.
$$
Let $\varepsilon$ denote the non trivial  character of $M$ and 
let $\lambda\in \mathbf{R}.$  The representation $\pi(P,\chi, \lambda, p)$
of $G$ by isometries on $L_p(\mathbf{R}, \mu)$ is defined by 
  $$
  \pi(P, 1_M, \lambda, p)(g) f(x)= |cx+d|^{-i\lambda -\frac{2}{p}} f\left(\frac{ax+b}{cx+d}\right)\quad \text{if} \quad g^{-1}=\begin{pmatrix} a& b
\\ 
c & d \end{pmatrix}
  $$
  and
  $$
  \pi(P, \varepsilon, \lambda, p)(g) f(x)= {\sgn}(cx+d)|cx+d|^{-i\lambda -\frac{2}{p}} f\left(\frac{ax+b}{cx+d}\right)\quad \text{if} \quad g^{-1}=\begin{pmatrix} a& b
\\ 
c & d \end{pmatrix}
  $$
 The Harish-Chandra $(\mathfrak{g}, K)$-module  underlying 
$\pi(P, \varepsilon, \lambda, p)$ is  $I_{P,  \chi, \nu}$ for   
  $$\nu= i\lambda+ \delta_p\rho_{P,2} = i\lambda + \dfrac{2}{p}- 1$$
 Assume that $p\neq 2.$  
 It is well-known    that  $I_{P,  \chi, \nu}$ is irreducible (see e.g. Proposition 1.3.3 in \cite{Vogan};
 observe that the irreducibility of  $I_{P, \varepsilon , \nu}$ depends on the fact that
$\dfrac{2}{p}- 1$ is not an integer). Moreover,  the modules $I_{P, \chi , \nu}$
are pairwise non equivalent for fixed $p.$

In summary, the $L_p$-dual space of $G=\mathrm{SL}_2(\mathbf{R})$ for $p\in(1,+\infty)$ with $p\neq 2$ is
$$
\widehat{G}_{L_p}= \left\{\pi(P, 1_M, \lambda, p) : \lambda\in \mathbf{R}\right\}
\cup \left\{\pi(P, \varepsilon, \lambda, p) : \lambda\in \mathbf{R}\right\} \cup \{1_G\}.
$$

\end{document}